\documentclass[oneside]{amsart}
\usepackage[a4paper, top=4cm, bottom=4cm]{geometry}
\usepackage{graphicx}
\usepackage{amsmath, amsthm, amssymb, wasysym, verbatim, bbm, color, graphics, geometry, hyperref}
\usepackage{enumitem}
\usepackage{float}
\usepackage{mathtools}
\usepackage{multirow}
\usepackage{siunitx}
\usepackage{stfloats}
\usepackage{thmtools,thm-restate}
\usepackage{threeparttable}
\usepackage{verbatim}
\usepackage{wrapfig}
\usepackage{url}
\usepackage{diagbox}
\usepackage{indentfirst}
\usepackage{listings}
\usepackage{xcolor} 
\usepackage{tikz-cd}
\usepackage{tikz}
\usetikzlibrary{arrows.meta,positioning}
\usepackage{caption}

\theoremstyle{plain}
\newtheorem{theorem}{Theorem}[section]

\newtheorem{prop}[theorem]{Proposition}
\newtheorem*{theorem*}{Theorem}

\theoremstyle{definition}
\newtheorem{definition}[theorem]{Definition}
\theoremstyle{remark}
\newtheorem{rem}[theorem]{Remark}
 \hypersetup{hidelinks}

\begin{document}
\title[A Murphy's law for supports of forgetful maps]{A Murphy's law for supports of forgetful maps between moduli spaces of stable pointed curves}
\author{Hao Zhang}
\address{Peking University}
\email{hao\_zhang@stu.pku.edu.cn}
\date{}
\begin{abstract}
    We study in this note the decomposition theorem for forgetful maps between moduli spaces of genus $g$ stable pointed curves $\overline{\mathcal{M}}_{g,m}\to \overline{\mathcal{M}}_{g,n}$. We prove that as $m$ goes to infinity, every stratum closure in $\overline{\mathcal{M}}_{g,n}$ appears as the support of a direct summand in the decomposition theorem. The proof uses Hassett's moduli spaces of stable weighted pointed curves and a wall-crossing argument. 
\end{abstract}
\maketitle
\section{Introduction}
We work over the field of complex numbers. 

 The decomposition theorem of Beilinson--Bernstein--Deligne--Gabber in \cite{BBD1982} provides a powerful tool for understanding the topology of proper maps between algebraic varieties. Specifically, let $f: X \to S$ be a proper map of complex algebraic varieties. The pushforward of the intersection complex can be decomposed into its shifted perverse cohomology,
\begin{equation*}
    Rf_* \operatorname{IC}_X \simeq \bigoplus_{i \in \mathbb{Z}} {}^p\mathcal{H}^i(Rf_* \operatorname{IC}_X)[-i] \; in  \; D^b_c(S).
\end{equation*}
Furthermore, the perverse sheaves ${}^p\mathcal{H}^i(Rf_* \operatorname{IC}_X)$ are semisimple, i.e., there is a finite stratification into nonsingular subvarieties $S = \bigsqcup_{\beta} S_{\beta}$ and a decomposition into a direct sum of intersection complexes of semisimple local systems,
\begin{equation*}
    {}^p\mathcal{H}^i(Rf_* \operatorname{IC}_X) \simeq \bigoplus_{\beta} \operatorname{IC}_{\overline{S_{\beta}}}(\mathcal{L}_{\beta}).
\end{equation*}

Via the decomposition theorem, one of the basic invariants we can associate to the proper map $f$ is the set of supports occurring in $Rf_*\mathrm{IC}_X$. We are particularly interested in the forgetful maps between moduli spaces of stable pointed curves. In genus $2$, Petersen proved the result for the moduli spaces $\mathcal{M}^{\mathrm{ct}}_{2,m}$ of stable $m$-pointed curves of compact type in \cite[Theorem 2.1]{PE}. The supports which appear in the decomposition theorem for the forgetful map $\mathcal{M}^{\mathrm{ct}}_{2,m}\to \mathcal{M}_2^{\mathrm{ct}}$ are exactly $\mathcal{M}_2^{\mathrm{ct}}$ and $\mathrm{Sym}^2(\mathcal{M}_{1,1})$. 

In this note, we extend the study of the supports in the decomposition theorem to the full Deligne--Mumford moduli spaces $\overline{\mathcal{M}}_{g,n}$ of stable pointed curves. We analyze the decomposition theorem for forgetful maps $\overline{\mathcal{M}}_{g,m}\to \overline{\mathcal{M}}_{g,n}$, and find that after forgetting sufficiently many points, all possible supports will appear.
\begin{theorem}[Main theorem]\label{THM:M}

Let $g,n,m$ be non-negative integers such that \mbox{$2g-2+n>0$} and \mbox{$m>n$}. Let $f_{g,n,m-n}:\overline{\mathcal{M}}_{g,m}\to \overline{\mathcal{M}}_{g,n}$ be the map which forgets the last $m-n$ marked points. Then, for every fixed integer $n$, when $m$ is sufficiently large, every stratum closure appears as the support of a direct summand in the decomposition theorem for ${Rf_{g,n,m-n}}_*\mathbb{Q}_{\overline{\mathcal{M}}_{g,{m}}}[3g-3+m]$.
\end{theorem}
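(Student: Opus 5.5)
We will factor the forgetful map $f_{g,n,m-n}$ through Hassett's moduli spaces of weighted pointed curves $\overline{\mathcal{M}}_{g,\mathcal{A}}$, with weights $\mathcal{A}=(1,\dots,1,\epsilon,\dots,\epsilon)$: $n$ weights equal to $1$ and $m-n$ weights equal to a small $\epsilon$. The map then splits as
\[
\overline{\mathcal{M}}_{g,m}\to \overline{\mathcal{M}}_{g,\mathcal{A}}\to \overline{\mathcal{M}}_{g,n}.
\]
The first arrow is a composition of reduction morphisms, which we decompose into elementary wall-crossings as the light weights decrease from $1$ to $\epsilon$. The second arrow, for $\epsilon$ small, has fibres over a curve $[C]$ that are (a quotient of) the configuration space of $m-n$ possibly colliding points on $C$ avoiding the nodes and the heavy points. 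Since the decomposition theorem is compatible with composition, it suffices to produce, for each stratum $S_\Gamma\subset\overline{\mathcal{M}}_{g,n}$, a summand of $Rf_*\mathbb{Q}[\dim]$ supported exactly on $\overline{S_\Gamma}$. We detect such a summand by a local computation at a general point $x\in S_\Gamma$.

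The key criterion is the following. A summand supported on $\overline{S_\Gamma}$ exists if and only if, at a general point $x$ of $S_\Gamma$, the stalk cohomology of $Rf_*\mathbb{Q}$ is not accounted for by the intermediate extensions of the summands supported on strictly larger strata. Equivalently, we compare the cohomology of the fibre $F_x$ with the contribution from the summands with larger supports, which is constrained by the perversity bounds (the support conditions on $\mathrm{IC}$ stalks). The fibre $F_x=f^{-1}(x)$ is a stable-curve compactification of configurations of $m-n$ points on the nodal curve $C$ with dual graph $\Gamma$. Its top-degree cohomology, that is, its number of irreducible components of maximal dimension, grows with $m$, because the points may be distributed among the components of $C$ and bubbled onto new rational components at the nodes.

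Concretely, we will show that for $m\gg0$ the fibre over $x$ has dimension $m-n+\operatorname{codim}S_\Gamma-\delta$ with $\delta<0$: extra dimension arises from rational bridges at each node carrying many points. The fibre is therefore "too big", and the relative dimension defect forces a nonzero perverse cohomology sheaf supported on $\overline{S_\Gamma}$. Explicitly, the components of top dimension $2\dim F_x > (m-n)+\operatorname{codim}S_\Gamma$ contribute to $H^{2\dim F_x}(F_x)$ in a degree that violates the support condition for any $\mathrm{IC}$ complex with strictly larger support. This is the standard "non-semismall implies new support" argument of de Cataldo–Migliorini.

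The hard part is to verify that the top cohomology classes survive. They must not be absorbed by the restriction of the summands on larger strata, and we must also make sure the resulting summand lives on all of $\overline{S_\Gamma}$ rather than only on a smaller stratum. To handle this, we will use the wall-crossing description. Each Hassett reduction $\overline{\mathcal{M}}_{g,\mathcal{A}}\to\overline{\mathcal{M}}_{g,\mathcal{A}'}$ contracts boundary divisors $D_{I}$ whose exceptional locus is a projective bundle over a product $\overline{\mathcal{M}}_{g,\dots}\times\overline{\mathcal{M}}_{0,|I|+1}$. This lets us compute the pushforward inductively by blow-up formulas and count dimensions of the exceptional loci lying over $S_\Gamma$. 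We expect the main obstacle to be controlling the fibre dimension stratum by stratum, in particular showing that the defect is largest exactly on $S_\Gamma$ for a suitable choice of how the $m-n$ points are spread over the vertices and edges of $\Gamma$. We would first try placing roughly $(m-n)/|E(\Gamma)|$ points on a rational bubble at every edge.
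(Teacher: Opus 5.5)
There is a genuine gap: the mechanism your argument rests on does not exist. The map $f_{g,n,m-n}$ is a composition of one-time forgetful maps. Each of these is the universal curve, hence flat of relative dimension one, so every fibre $F_x$ has pure dimension $m-n$ and there is no dimension jump over any stratum. Your source of extra dimension goes the wrong way: $j$ forgotten points on a rational bridge at a node (or on a rational tail) move in a family of dimension $j-1$, so such loci are never top-dimensional.

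The inequality $2\dim F_x>(m-n)+\operatorname{codim}S_\Gamma$ therefore holds trivially once $m-n>\operatorname{codim}S_\Gamma$, and it forces nothing. Failure of semismallness does not by itself produce a summand on $\overline{S_\Gamma}$. A summand $\mathrm{IC}_Z(L)[-i]$ of $Rf_*\mathbb{Q}[\dim]$ requires $|i|\le (m-n)-\operatorname{codim}Z$, so the extreme perverse degrees are supported only on the whole space. Nor are top-degree fibre classes automatically out of reach of larger supports: $\mathbb{Q}_{\overline{\mathcal{M}}_{g,n}}[3g-3+n-(m-n)]$ contributes to $H^{2(m-n)}(F_x)$ at every $x$.

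Finally, take any stratum whose graph has a self-loop, e.g.\ the irreducible one-nodal divisor. There $F_x$ has a single top-dimensional component, so $H^{2(m-n)}(F_x)\cong\mathbb{Q}$ and no top-degree count can detect that stratum.

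Two things are missing: (a) a lower-degree count for the non-compact-type divisor, and (b) a mechanism that propagates a few seed supports to all strata.

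For (a), the paper works in degree $2k-2$ for $\overline{\mathcal{M}}_{g,\epsilon^{n+k}}\to\overline{\mathcal{M}}_{g,\epsilon^n}$. The $\epsilon$-weights are essential here, not cosmetic. The general fibre is $C^k$, so the fully supported summands contribute at most $k+d\binom{k}{2}$ to $H^{2k-2}$ at a general point of the divisor, with $d$ independent of $k$. On the other hand, a spectral sequence for the stratification by the number of points on the genus $g-1$ component, with Losev--Manin chains $\overline{\mathcal{M}}_{0,1^2\epsilon^{k-l}}$ at the node, gives $\dim H^{2k-2}(F_k)\ge 2^k-1-k(2g-1)$. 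The wall-crossing formula, whose new supports have codimension at least $2$, then transfers the resulting divisorial summand to $\overline{\mathcal{M}}_{g,n}$.

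For (b), the paper does not compute fibres stratum by stratum. It iterates the explicit one-time formula on normalizations of tree strata, splitting vertices after adding legs to kill automorphisms. It then inducts on genus through the finite gluing map $\overline{\mathcal{M}}_{g-1,1^2\cup\mathcal{A}}\to\overline{S}_{G_{\mathcal{A},\mathrm{sing}}}$, checking that the summands come from $\mathbb{Q}$ rather than $\mathbb{Q}_{\mathrm{sign}}$.

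Your observation that forgotten points can be distributed among the components of $C$ is the one sound ingredient. It is how the one-edge trees are detected in the one-time formula, via $H^2$ of a two-component fibre. But it cannot reach graphs with loops, and your sketch has no substitute for (a) or (b).
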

An effective bound for $m$ in terms of $g$ and $n$ is computed in Remark \ref{bd}.

The proof uses the moduli spaces of stable weighted pointed curves introduced by Hassett in \cite{Ha} and we generalize the problem to these more general moduli spaces. We reduce all the computation to the case of the minimal weighted moduli space $\overline{\mathcal{M}}_{g,\epsilon^n}$ (for a detailed definition, see Definition \ref{def}) through the wall-and-chamber structure on the moduli spaces of stable weighted pointed curves. The advantage of $\overline{\mathcal{M}}_{g,\epsilon^n}$ is that all rational tails on a stable curve are contracted and as a result, the cohomology groups of the fibers of forgetful maps become computable. We determine whether a support occurs by explicitly computing the cohomology of the fibers. The paper is organized in the following steps.

\textbf{Step 1:} We review the moduli spaces of stable weighted pointed curves and the wall-and-chamber structure introduced by Hassett. We compute the formula for the decomposition theorem for the reduction maps between differently weighted moduli spaces arising from the wall-crossings, which allows us to recover results for $\overline{\mathcal{M}}_{g,n}$ from the results for $\overline{\mathcal{M}}_{g,\epsilon^n}$.

\textbf{Step 2:} In the second step, we calculate the explicit formula for the decomposition theorem for the one-time forgetful maps, each of which forgets only the last marked point. The one-time forgetful maps $\overline{\mathcal{M}}_{g,n+1}\to \overline{\mathcal{M}}_{g,n}$ and $\overline{\mathcal{M}}_{g,\mathcal{A}\cup\epsilon}\to \overline{\mathcal{M}}_{g,\mathcal{A}}$ (for the notation, see Definition \ref{Ha}) can be viewed as the morphism of the universal curve family. Using this description, we determine the supports occurring in the decomposition theorem.

\textbf{Step 3:} For general forgetful maps, we first restrict the maps to the moduli spaces of curves of compact type. We repeatedly apply the formula for the one-time forgetful maps to the newly appearing direct summands in the decomposition theorem for one-time forgetful maps and describe how the supports change after being pushed forward by one-time forgetful maps step by step. As a special case in Remark \ref{g2}, in genus $2$, we recover the result about supports in the decomposition theorem for $\mathcal{M}^{\mathrm{ct}}_{2,m}\to \mathcal{M}_2^{\mathrm{ct}}$ originally proved by Petersen.

\textbf{Step 4:} Next, we solve the support problem on the divisor which is the complement of the moduli space of curves of compact type by computing the cohomology of general fibers over this divisor. We compute the cohomology of the fibers over the divisor when the forgetful map is $\overline{\mathcal{M}}_{g,\epsilon^m}\to\overline{\mathcal{M}}_{g,\epsilon^n}$ and prove that the divisor $D:=\overline{\mathcal{M}}_{g,\epsilon^n}\backslash\mathcal{M}^{\mathrm{ct}}_{g,\epsilon^n}$ occurs as the support when $m-n$ is sufficiently large. Moreover, we recover the same result for arbitrary weight data because of the wall-crossing formula. The rule governing how the supports contained in the divisor change after being pushed forward by one-time forgetful maps is related to the case of moduli spaces of genus $g-1$ stable weighted pointed curves via the finite morphism from the moduli space of genus $g-1$ stable curves to the divisor. By an inductive argument, we prove the theorem in the generalized case for $\overline{\mathcal{M}}_{g,\mathcal{A}\cup\epsilon^{m-n}}\to\overline{\mathcal{M}}_{g,\mathcal{A}}$ and the method can also be applied to prove the theorem for $\overline{\mathcal{M}}_{g,m}\to\overline{\mathcal{M}}_{g,n}$.
\subsection*{Acknowledgements}
    I would like to thank Qizheng Yin, who guided me in completing this work and supported my study of algebraic geometry. I would also like to thank Terry Dekun Song for many helpful discussions and insights.
\section{Moduli spaces of stable weighted pointed curves and a wall-crossing formula}
In this section, we review the moduli spaces of stable weighted pointed curves and the wall-and-chamber structure on the domain of weight data introduced by Hassett. We provide a wall-crossing formula for the decomposition theorem. 
\subsection{Moduli spaces of stable weighted pointed curves}\label{sec:MS}
\begin{definition}\label{def}

    Given the genus $g$ and weight data $\mathcal{A}:=(a_1,a_2,\ldots,a_n)\in \mathbb{Q}^n$ such that
    \begin{equation}\label{var}
        0< a_i\leq1 ,\quad 2g-2+a_1+a_2+\ldots+a_n>0,
    \end{equation}
    a family of nodal curves of genus $g$ with $n$ marked points $\pi:(C,s_1,s_2,\ldots,s_n)\to B$ is said to be stable of type $(g,\mathcal{A})$  if
    \begin{enumerate}
        \item [(i)] The sections $s_i$ lie in the smooth locus of $\pi$ and for any subset $\{s_{i_1},\ldots,s_{i_r}\}\subset\{s_1,\ldots,s_n\}$ with nonempty intersection we have 
        \begin{equation*}
            a_{i_1}+\ldots+a_{i_r}\leq1,
        \end{equation*}
        \item [(ii)] $K_\pi+a_1s_1+\ldots+a_ns_n$ is $\pi$-relatively ample.
    \end{enumerate}
    All the weight data in this note satisfy \eqref{var}. We also write the weight data as $\alpha_1^{\beta_1}\ldots\alpha_r^{\beta_r}$ when some weights are repeated. This notation means that the first $\beta_1$ weights are all $\alpha_1$, the next $\beta_2$ weights are $\alpha_2$,$\ldots$, and the last $\beta_r$ weights are~$\alpha_r$. We denote the number of weights in $\mathcal{A}$ by $|\mathcal{A}|$.
\end{definition}

\begin{theorem}[{\cite[Theorem 2.1]{Ha}}]
    Let $(g,\mathcal{A})$ be the data as above. There exists a connected, smooth and proper Deligne--Mumford stack $\overline{\mathcal{M}}_{g,\mathcal{A}}$ representing the moduli problem of stable pointed curves of type $(g,\mathcal{A})$. The corresponding coarse moduli space is projective.
\end{theorem}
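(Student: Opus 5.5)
The plan follows Hassett's construction: realize $\overline{\mathcal{M}}_{g,\mathcal{A}}$ as a moduli stack and then check the listed properties one by one. I would first show that stable curves of type $(g,\mathcal{A})$ form an algebraic stack. The key input is a uniform bound: for any stable curve of type $(g,\mathcal{A})$, a fixed multiple $N(K_C+\sum a_i s_i)$, with $N$ clearing the denominators of the $a_i$ and taken large, is very ample with Hilbert polynomial depending only on $(g,\mathcal{A})$. Given this, the stack is the quotient of a locally closed subscheme of a Hilbert scheme of embedded pointed curves by $\mathrm{PGL}$. Conditions (i) and (ii) of Definition \ref{def} are respectively open and stable under the relevant base changes, so the parameter locus is locally closed.

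Next I would verify three properties.
\begin{enumerate}
\item[(a)] \emph{Finite automorphism groups.} Ampleness of $K_C+\sum a_i s_i$ forces every rational component to have enough special points, counted with weight, so infinitesimal automorphisms vanish and the stack is Deligne--Mumford.
\item[(b)] \emph{Smoothness.} The deformation theory is that of nodal curves with marked smooth points. Obstructions lie in $\mathrm{Ext}^2(\Omega_C(\sum s_i),\mathcal{O}_C)=0$ on a curve, even when points coincide, since the points stay in the smooth locus. Hence the stack is smooth of dimension $3g-3+n$.
\item[(c)] \emph{Properness.} I would use the valuative criterion via a weighted stable reduction. Given a family over a punctured disc, first replace the weights by $1$ after separating colliding points, and take Deligne--Mumford stable reduction of the associated genus-$g$, $n$-pointed curve. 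Then run a relative log-MMP, taking the relative log canonical model $\mathrm{Proj}\bigoplus_k \pi_*\mathcal{O}(k(K_\pi+\sum a_i s_i))$. This contracts exactly the components on which the weighted log canonical divisor fails to be ample.
\end{enumerate}
The central fiber of this model satisfies (i) and (ii). Uniqueness comes from uniqueness of log canonical models.

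Connectedness follows because the open locus of smooth curves with distinct points is irreducible and dense: every stable weighted curve smooths, with its points separated. Projectivity of the coarse space I would deduce from Kollár's ampleness lemma or, as Hassett does, from positivity of $\pi_*\omega_\pi(\sum a_i s_i)^{\otimes N}$ on the base.

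The main obstacle is the stable reduction step in (c). One must check that the log canonical model has nodal central fiber and that points end up in the smooth locus with the correct weighted collisions, i.e.\ that condition (i) holds exactly. I would handle this by analyzing which components contract: rational tails and bridges with total weight at most $1$. These contract to smooth points of the new central fiber, where the points on them coincide, and their total weight is at most $1$ precisely by the numerical condition.
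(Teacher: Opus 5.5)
Your outline is correct and is, in substance, Hassett's own proof of \cite[Theorem 2.1]{Ha}: boundedness via a uniform multiple of $K_C+\sum a_is_i$, unobstructed deformations, Deligne--Mumford stable reduction followed by the relative log canonical model, and Koll\'ar-type positivity for projectivity. The paper gives no argument of its own and simply cites Hassett. The one thing to make explicit is that condition (i), not just the weighted degree count, does the work at two points: in (a) it guarantees three \emph{distinct} special points on each rational component, and in the uniqueness step it shows, by inversion of adjunction along the central fiber, that $(\mathcal{C},\mathcal{C}_0+\sum a_is_i)$ is log canonical where sections meet, which is what makes each weighted stable model the log canonical model.
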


 Unlike the usual Deligne--Mumford moduli spaces, Hassett's weighted moduli spaces allow marked points to coincide as long as the sum of their weights is no larger than $1$. 
Here are some typical examples. For $\mathcal{A}=(1,\ldots,1)$, this is just the Deligne--Mumford moduli space of stable pointed curves. For $g=1$ and $\mathcal{A}={(\epsilon,\ldots,\epsilon)}$, the stable curve can either be a smooth genus $1$ curve with n marked points or be several $\mathbb{P}^1$ forming a circle with at least one marked point on each $\mathbb{P}^1$. In particular, the curve may consist of a single $\mathbb{P}^1$ with one node obtained by gluing two points together. For $g=0$ and $\mathcal{A}=(1,1,\epsilon,\ldots,\epsilon)$, the stable curve in this case is a chain of $\mathbb{P}^1$ with the two weight $1$ marked points lying at the two ends of the chain and at least one weight $\epsilon$ marked point on each $\mathbb{P}^1$.

\subsection{Wall-crossing}
In this subsection, we describe the decomposition theorem for the reduction maps arising from changes of weights. This provides a relation between $\overline{{\mathcal{M}}}_{g,\epsilon^n}$ and $\overline{{\mathcal{M}}}_{g,\mathcal{A}}$ for arbitrary weight data $\mathcal{A}$ of $n$ weights. Here $\epsilon^n$ means $n$ weight $\epsilon$ marked points as mentioned in Definition \ref{def} and similarly we write $\overline{{\mathcal{M}}}_{g,n}$ as $\overline{{\mathcal{M}}}_{g,1^n}$.

Notice that when changing the weight data, $\overline{{\mathcal{M}}}_{g,\mathcal{A}}$ remains unchanged unless the weights cross one of the relevant walls from one side to the other. In fact, there are two chamber decompositions for the domain of weight data. The coarse one is given by the set of walls 
\begin{equation*}
    W_c=\{\sum_{i\in I}a_i=1:I\subset\{1,2,\ldots,n\},2<|I|\leq n\},
\end{equation*} while the fine one is given by 
\begin{equation*}
    W_f=\{\sum_{i\in I}a_i=1:I\subset\{1,2,\ldots,n\},2\leq|I|\leq n\}.
\end{equation*} We have the following proposition concerning moduli spaces of weights in the same chamber.
\begin{prop}[{\cite[Proposition 5.1]{Ha}}]\label{Ha}
The moduli space of stable weighted pointed curves $\overline{\mathcal{M}}_{g,\mathcal{A}}$ is constant on each coarse chamber while the universal curve family $\mathcal{C}_{g,\mathcal{A}}$ over $\overline{\mathcal{M}}_{g,\mathcal{A}}$ is constant on each fine chamber.
\end{prop}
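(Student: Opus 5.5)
We prove both assertions by showing that the stability conditions of Definition \ref{def} depend only on which side of each wall $\sum_{i\in I}a_i=1$ the weight data lies on. For the fine chambers this gives literal equality of moduli functors and universal families. For a coarse chamber, which can meet walls with $|I|=2$, the two functors will be identified by a natural bubbling/contraction operation on the two colliding points.

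\textbf{Step 1 (componentwise stability).} For a nodal curve $(C,s_1,\ldots,s_n)$ over an algebraically closed field, condition (ii) holds iff for every irreducible component $C_v$ of the normalization of genus $g_v$ with $k_v$ preimages of nodes we have
\[
2g_v-2+k_v+\sum_{s_i\in C_v}a_i>0 .
\]
For $g_v\ge 1$ this can only fail when $g_v=1$, $k_v=0$ and $\sum a_i=0$. That case cannot occur in a connected curve satisfying \eqref{var}: such a component would be all of $C$, forcing $n=0$. For $g_v=0$ and $k_v\ge 2$ the inequality reduces to $\sum a_i>0$ (when $k_v=2$) and holds automatically as soon as a point lies there. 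So the only non-automatic condition comes from rational tails ($g_v=0$, $k_v=1$), where it reads $\sum_{s_i\in C_v}a_i>1$. Condition (i) compares sums of weights with $1$ over subsets of colliding points.

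\textbf{Step 2 (fine chambers).} Both conditions are of the form ``$\sum_{i\in I}a_i\le 1$'' or ``$\sum_{i\in I}a_i>1$'' for subsets $I$ with $|I|\ge 2$. For $|I|=1$ these conditions hold automatically, since $a_i\le 1$ and a tail carrying a single point is never stable. Hence two weight data in the same fine chamber, i.e.\ on the same side of every hyperplane in $W_f$ with the boundary convention fixed by $\le$ versus $>$, define the same stable objects over every base. Ampleness is fibrewise, so they give literally the same moduli stack together with the same universal family $\mathcal{C}_{g,\mathcal{A}}$.

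\textbf{Step 3 (coarse chambers).} It remains to cross a wall $a_i+a_j=1$ with $|I|=2$ without crossing any wall with $|I|>2$. Let $\mathcal{A}$ be the side where $a_i+a_j\le 1$ and $\mathcal{B}$ the side where $a_i+a_j>1$.

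On the $\mathcal{A}$-side, no third point can collide with $s_i,s_j$ and no rational tail can carry exactly $\{s_i,s_j\}$ together with other points in a way that changes its stability. This is exactly because we do not cross any wall with $|I|>2$.

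Given an $\mathcal{A}$-stable family, we blow up the universal curve along the locus where $s_i=s_j$ and take strict transforms of the sections. This replaces each collision by a rational tail carrying exactly $s_i,s_j$, which has three special points and is $\mathcal{B}$-stable by Step 1. Conversely, Hassett's reduction morphism $\overline{\mathcal{M}}_{g,\mathcal{B}}\to\overline{\mathcal{M}}_{g,\mathcal{A}}$ contracts precisely those tails by taking the relative model of $K_\pi+\sum a_ks_k$. These two constructions are mutually inverse on geometric points.

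\textbf{Step 4 (finishing the isomorphism).} The bubbled component has three special points, so it has no automorphisms. Hence automorphism groups also match. The reduction map is therefore a proper, representable morphism of smooth Deligne--Mumford stacks that is bijective on geometric points and on stabilizers. By Zariski's main theorem (the target is normal) it is an isomorphism.

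The universal curves genuinely differ across such a wall: one is the blow-up of the other along $s_i=s_j$. This is why only the fine chambers give constancy of $\mathcal{C}_{g,\mathcal{A}}$.

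\textbf{Main obstacle.} The expected obstacle is Step 3 in families: the blow-up must commute with base change, and its exceptional locus must be a $\mathbb{P}^1$-bundle over the collision locus. I would check this étale locally, where the total space near a collision is smooth over the base and $s_i=s_j$ is a section. The blow-up along a relative Cartier divisor in the relative surface is then flat and base-change compatible.
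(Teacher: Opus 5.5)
The paper does not prove this statement. It is quoted from Hassett, so there is no internal argument to compare against line by line.

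Your reconstruction is sound. Steps 1--2 reduce $\mathcal{A}$-stability of a geometric fibre to the truth values of $\sum_{i\in I}a_i\le 1$ for $|I|\ge 2$. This gives literal equality of the moduli functors on a fine chamber, hence equality of both $\overline{\mathcal{M}}_{g,\mathcal{A}}$ and $\mathcal{C}_{g,\mathcal{A}}$. Steps 3--4 show that the reduction map across a wall $a_i+a_j=1$ is proper, representable, and bijective on points and stabilizers, hence an isomorphism.

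The closest thing in the paper is the proof of Proposition \ref{thm:WC}. It uses Hassett's description of a single-wall reduction map as a blow-up whose fibres over the centre are $\overline{\mathcal{M}}_{0,1\cup\mathcal{A}_I}\cong\mathbb{P}^{|I|-2}$. For $|I|=2$ this fibre is a point, which is exactly what your pointwise bubbling analysis checks by hand. Your route needs only the existence of reduction morphisms and Zariski's main theorem, not the blow-up structure.

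A few points to tidy:

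\begin{itemize}
\item In Step 1 you skip the case $g_v=0$, $k_v=0$. There the condition is $\sum a_i>2$, which is just \eqref{var}.
\item Hassett's reduction morphism requires $\mathcal{B}\ge\mathcal{A}$ componentwise. Say that you join two points of a coarse chamber by a path meeting the walls of $W_f\setminus W_c$ only at points lying on a single wall. Then cross each such wall by moving only $a_i$ and $a_j$.
\item For the bubbled curve you should check every component, not only the bubble. This works because only the conditions indexed by $\{i,j\}$ change.
\item Zariski's main theorem needs birationality. It holds because the reduction map is an isomorphism over the dense open locus where $s_i\ne s_j$.
\end{itemize}

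Your \emph{Main obstacle} paragraph is misstated, though nothing depends on it. The centre $s_i(Z)$ has codimension $2$ in the total space and is not a relative Cartier divisor. Its blow-up does not commute with arbitrary base change: over $Z$ itself the blow-up does nothing, whereas the bubbled family has an extra $\mathbb{P}^1$ in every fibre. If you want an explicit inverse instead of Zariski's main theorem, blow up once on the universal curve over the smooth stack $\overline{\mathcal{M}}_{g,\mathcal{A}}$. There $Z$ is a smooth divisor and $s_i,s_j$ meet transversally, and you only need compatibility with flat (\'etale) base change.
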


For two weight data $\mathcal{A}=(a_1,\ldots,a_n)$ and $\mathcal{B}=(b_1,\ldots,b_n)$ such that $a_i\geq b_i$ for any $i\in\{1,2,\ldots,n\}$, there exists a natural reduction map $f:\overline{{\mathcal{M}}}_{g,\mathcal{A}}\to \overline{{\mathcal{M}}}_{g,\mathcal{B}}$, which is obtained by replacing the weights and then stabilizing the curve. Since moduli spaces on the same coarse chamber are isomorphic, the reduction map can be defined between two moduli spaces which are separated by exactly one wall. Let $\mathcal{A}=(a_1,\ldots,a_n)$ and $\mathcal{B}=(b_1,\ldots,b_n)$ such that there exists exactly one subset $I\subset\{1,2,\ldots,n\}$ with $|I|>2$ satisfying 
\begin{equation}\label{con1}
    \sum_{i\in I}a_i>1\ge\sum_{i\in I}b_i,
\end{equation}
and for every other subset $J$ with $|J|>2$, 
\begin{equation}\label{con2}
    \sum_{i\in J}a_i,\sum_{i\in J}b_i\leq 1\quad\textup{or}\quad\sum_{i\in J}a_i,\sum_{i\in J}b_i>1.
\end{equation}
We compute the decomposition theorem for the reduction maps arising from the simplest wall-crossing.

\begin{prop}[Wall-crossing formula]\label{thm:WC}

    Let $\mathcal{A}$, $\mathcal{B}$ be two weight data of $n$ weights and $I$ be the unique subset of $\{1,2,\ldots,n\}$ of more than $2$ elements such that \eqref{con1} holds, and for every other subset $J$ of more than $2$ elements, condition \eqref{con2} holds. Let $f:\overline{{\mathcal{M}}}_{g,\mathcal{A}}\to \overline{{\mathcal{M}}}_{g,\mathcal{B}}$ be the reduction map. Then 
    \begin{equation}\label{WC}
    Rf_*\mathbb{Q}_{\overline{{\mathcal{M}}}_{g,\mathcal{A}}}\simeq \mathbb{Q}_{\overline{{\mathcal{M}}}_{g,\mathcal{B}}}\oplus{\mathbb{Q}_{\overline{\mathcal{M}}_{g,\sum_{i\in I}b_i}\cup\mathcal{B}_I^c}[-2]}\oplus{\ldots}\oplus{\mathbb{Q}_{\overline{\mathcal{M}}_{g,\sum_{i\in I}b_i}\cup\mathcal{B}_I^c}[4-2|I|]}.
    \end{equation}
\end{prop}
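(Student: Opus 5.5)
The plan is to use Hassett's description of the reduction map across a single wall as a blow-up along a smooth center, and then apply the standard formula for the pushforward of the constant sheaf under a blow-up with smooth center.

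By Hassett's analysis of the simplest wall-crossing (\cite[Section 4]{Ha}), when exactly one subset $I$ with $|I|>2$ satisfies \eqref{con1} and all other subsets satisfy \eqref{con2}, the reduction map $f:\overline{\mathcal{M}}_{g,\mathcal{A}}\to\overline{\mathcal{M}}_{g,\mathcal{B}}$ is an isomorphism away from the locus $Z\subset\overline{\mathcal{M}}_{g,\mathcal{B}}$ where the points $s_i$, $i\in I$, all coincide. On the $\mathcal{B}$-side such a collision is allowed because $\sum_{i\in I}b_i\le 1$. On the $\mathcal{A}$-side it is replaced by a rational tail containing the points of $I$ and attached at one node.

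First, I will identify $Z$. Replacing the coincident points of $I$ by a single point of weight $\sum_{i\in I}b_i$ gives an isomorphism
\[
Z\simeq\overline{\mathcal{M}}_{g,\sum_{i\in I}b_i\cup\mathcal{B}_I^c}.
\]
The uniqueness of the wall guarantees that the remaining weights impose no further coincidences or degenerations. So $Z$ is smooth of codimension $|I|-1$: we lose $|I|$ point parameters and regain one.

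Next, I will identify the fiber over a point of $Z$. It is the moduli space of configurations of the $|I|$ points on the rational tail $\mathbb{P}^1$, taken modulo the $\mathbb{G}_m\ltimes\mathbb{G}_a$ fixing the attaching node. The points have weights $a_i$ with $\sum_{i\in I} a_i>1$, but every proper subset of $I$ of size $>2$ has weight sum at most $1$. In this chamber Hassett's space is $\mathbb{P}^{|I|-2}$, realized as $(\mathbb{A}^{|I|}\setminus\text{diagonal})/(\mathbb{G}_m\ltimes\mathbb{G}_a)$. I will conclude that $f$ is the blow-up of $\overline{\mathcal{M}}_{g,\mathcal{B}}$ along $Z$, as Hassett proves (\cite[Proposition 4.5]{Ha}), and the exceptional divisor is a $\mathbb{P}^{|I|-2}$-bundle over $Z$. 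If only the fiber description were available, I would instead argue that $f$ is a proper birational map between smooth stacks whose fibers over $Z$ are projective spaces of dimension $|I|-2$.

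Finally, for a blow-up $f$ of a smooth variety along a smooth center $Z$ of codimension $c$, the decomposition theorem gives
\[
Rf_*\mathbb{Q}\simeq\mathbb{Q}\oplus\bigoplus_{k=1}^{c-1}\mathbb{Q}_Z[-2k].
\]
The easiest proof is direct. The decomposition theorem applies since $f$ is proper from a smooth space, and all summands are supported on either the whole space or on $Z$, because $f$ is an isomorphism elsewhere. The local systems on $Z$ are $R^{2k}$ of the projective bundle. These are trivial, since the projective bundle $\mathbb{P}(N_Z)$ has cohomology generated by powers of the relative hyperplane class. Matching stalk cohomology $H^*(\mathbb{P}^{c-1})$ at points of $Z$ pins down the shifts. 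With $c=|I|-1$, the summands run from $[-2]$ to $[-2(|I|-2)]=[4-2|I|]$, which is exactly \eqref{WC}.

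The main obstacle is the geometric identification in the first two steps: that $Z$ is precisely this smooth Hassett space and that $f$ is the blow-up with these fibers. I would rely on Hassett's explicit result here rather than reprove it. The passage to stacks and rational coefficients is harmless.
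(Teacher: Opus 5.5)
Your proposal is correct and follows essentially the same route as the paper: both rely on Hassett's result that the single-wall reduction map is the blow-up of $\overline{\mathcal{M}}_{g,\mathcal{B}}$ along the smooth center $\overline{\mathcal{M}}_{g,\sum_{i\in I}b_i\cup\mathcal{B}_I^c}$, with fibers $\overline{\mathcal{M}}_{0,1\cup\mathcal{A}_I}\simeq\mathbb{P}^{|I|-2}$, and then read off the summands from the standard decomposition for a blow-up along a smooth center. The only cosmetic difference is how triviality of the local systems on the center is justified: you use powers of the relative hyperplane class, whereas the paper observes that the exceptional locus is the product $\overline{\mathcal{M}}_{0,1\cup\mathcal{A}_I}\times\overline{\mathcal{M}}_{g,1\cup\mathcal{A}_I^c}$, i.e. a trivial fibration over the center.
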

\begin{proof}
    The reduction map $f$ is birational and the only exceptional locus is $\overline{{\mathcal{M}}}_{0,1\cup\mathcal{A}_I}\times\overline{{\mathcal{M}}}_{g,1\cup\mathcal{A}_I^c}$. Here the weight data $1\cup\mathcal{A}_I$ is $(1,a_{i_1},\ldots,a_{i_r})$ where $I=\{i_1,\ldots,i_r\}$. Notice that $\overline{{\mathcal{M}}}_{g,1\cup\mathcal{A}_I^c} $ is isomorphic to $\overline{{\mathcal{M}}}_{g,\Sigma_{i\in I}b_i\cup\mathcal{B}_I^c}$ by the assumption on the weight data and Proposition \ref{Ha}. Therefore, the restricted map 
    \begin{equation*} f:\overline{{\mathcal{M}}}_{0,1\cup\mathcal{A}_I}\times\overline{{\mathcal{M}}}_{g,1\cup\mathcal{A}_I^c}\to \overline{{\mathcal{M}}}_{g,\sum_{i\in I}b_i\cup\mathcal{B}_I^c}
    \end{equation*} 
    is a trivial $\overline{{\mathcal{M}}}_{0,1\cup\mathcal{A}_I}$-fibration. The moduli space $\overline{{\mathcal{M}}}_{0,1\cup\mathcal{A}_I}$ is isomorphic to $\mathbb{P}^{|I|-2}$ and the reduction map $f$ is the blow up along the center $\overline{{\mathcal{M}}}_{g,\sum_{i\in I}b_i\cup\mathcal{B}_I^c}$  (see \cite[Section 6.1]{Ha} and \cite[Remark 4.6]{Ha}, the weight data $1\cup\mathcal{A}_I$ is the same as the weight data $\mathcal{A}_{1,1}[|I|+1]$ in Hassett's notation). Since $f$ is the blow up along ${\overline{\mathcal{M}}_{g,\sum_{i\in I}b_i}\cup\mathcal{B}_I^c}$, the direct summands occurring in the decomposition theorem which are supported on ${\overline{\mathcal{M}}_{g,\sum_{i\in I}b_i}\cup\mathcal{B}_I^c}$ are exactly $\mathbb{Q}_{\overline{\mathcal{M}}_{g,\sum_{i\in I}b_i}\cup\mathcal{B}_I^c}[-2],\ldots,\mathbb{Q}_{\overline{\mathcal{M}}_{g,\sum_{i\in I}b_i}\cup\mathcal{B}_I^c}[4-2|I|]$.
\end{proof}
When we replace $\mathbb{Q}_{\overline{{\mathcal{M}}}_{g,\mathcal{A}}}$ by an intersection complex with the coefficient $\mathcal{L}$ of geometric origin, the possible newly appearing supports except $\overline{{\mathcal{M}}}_{g,\mathcal{B}}$ after wall-crossing are contained in ${\overline{\mathcal{M}}_{g,\sum_{i\in I}b_i}\cup\mathcal{B}_I^c}$ because $f$ is an isomorphism outside $\overline{{\mathcal{M}}}_{0,1\cup\mathcal{A}_I}\times\overline{{\mathcal{M}}}_{g,1\cup\mathcal{A}_I^c}$.

Every wall-crossing can be decomposed into the composition of these simple wall-crossings. We can relate the moduli space $\overline{\mathcal{M}}_{g,\mathcal{A}}$ to $\overline{\mathcal{M}}_{g,\epsilon^n}$ for arbitrary weight data $\mathcal{A}$ in this way when $g\neq 0$.
\section{Decomposition theorem for one-time forgetful maps}

It is difficult to directly compute the decomposition theorem for arbitrary forgetful maps $\overline{{\mathcal{M}}}_{g,\epsilon^m}\to \overline{{\mathcal{M}}}_{g,\epsilon^n}$ or $\overline{{\mathcal{M}}}_{g,1^m}\to \overline{{\mathcal{M}}}_{g,1^n}$. However, the decomposition theorem for the one-time forgetful map which forgets only the last marked point can be determined. If the forgotten marked point has sufficiently small weight $\epsilon$, the one-time forgetful map is just the morphism of the universal curve family. The one-time forgetful map between usual Deligne--Mumford moduli spaces of stable pointed curves is also the morphism of the universal curve family. In these cases, we can apply the Goresky--MacPherson inequality to the problem. Only codimension $0$ or $1$ strata closure can occur in the decomposition theorem. We can determine whether these supports appear by computing the cohomology of the fibers.

    \begin{theorem}[Goresky--MacPherson inequality {\cite[Proposition 1]{NBC}}] \label{thm:gm}
        Assume $X$ is smooth or $\mathrm{IC}_X\simeq\mathbb{Q}_X[\dim X]$. Let $f: X \to S$ be a proper morphism with fibers of pure dimension $d$. Let $Z$ be the support of a simple perverse sheaf occurring in $Rf_* \mathbb{Q}_{X}[\dim X]$. Then we have the following inequality
\[
\operatorname{codim}(Z) \le d.
\]
    \end{theorem}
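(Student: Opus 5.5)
The plan is to compare the stalk of a summand supported on $Z$ at a general point of $Z$ with the cohomology of the fibre of $f$ over that point, and to use Verdier self-duality of $Rf_*\mathbb{Q}_X[\dim X]$ to control the sign of the shift. Throughout I assume $f$ is surjective, replacing $S$ by $f(X)$ otherwise; this is the situation of the forgetful maps we apply it to. Surjectivity together with pure fibre dimension gives $\dim X=\dim S+d$.

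\textbf{Step 1: the summand.} Write $N=\dim X$. By the decomposition theorem, $Rf_*\mathbb{Q}_X[N]\simeq\bigoplus_i {}^p\mathcal{H}^i[-i]$ with each ${}^p\mathcal{H}^i$ semisimple. So $Z$ is the support of some simple summand $\mathrm{IC}_Z(\mathcal{L})[-i]$ of $Rf_*\mathbb{Q}_X[N]$.

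\textbf{Step 2: reduce to $i\ge 0$.} Since $X$ is smooth, or more generally $\mathrm{IC}_X\simeq\mathbb{Q}_X[N]$, the complex $\mathbb{Q}_X[N]$ is Verdier self-dual. Because $f$ is proper, $Rf_*$ commutes with Verdier duality, so $Rf_*\mathbb{Q}_X[N]$ is self-dual as well. Verdier duality sends $\mathrm{IC}_Z(\mathcal{L})[-i]$ to $\mathrm{IC}_Z(\mathcal{L}^\vee)[i]$. Hence $\mathrm{IC}_Z(\mathcal{L}^\vee)[i]$ is also a summand, with the same support. Relative hard Lefschetz would give the same symmetry. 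Replacing the summand by its dual if necessary, we may therefore assume the summand is $\mathrm{IC}_Z(\mathcal{L}')[-j]$ with $j=|i|\ge 0$.

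\textbf{Step 3: stalk at a general point.} Choose a general point $z\in Z$, lying in the open dense locus where $\mathrm{IC}_Z(\mathcal{L}')|_{Z^\circ}=\mathcal{L}'[\dim Z]$. The stalk of the summand at $z$ is then $\mathcal{L}'_z$ placed in cohomological degree $\dim Z+j$, and it is nonzero there. Being a direct summand, it injects into the stalk of $Rf_*\mathbb{Q}_X[N]$ at $z$. By proper base change, in that degree this stalk equals $H^{N-\dim Z+j}(F_z;\mathbb{Q})$, where $F_z=f^{-1}(z)$. Since $F_z$ has dimension $d$, its rational cohomology vanishes above degree $2d$. Therefore
\[
N-\dim Z+j\le 2d .
\]

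\textbf{Step 4: conclude.} Substituting $N=\dim S+d$ gives $\operatorname{codim}Z=\dim S-\dim Z\le d-j\le d$, which is the claimed inequality.

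The only step requiring care is Step 2. Using Step 3 alone with an arbitrary sign of $i$ bounds nothing useful, since negative shifts push the stalk into low degrees where no vanishing is available. Self-duality is exactly what supplies the nonnegative shift. Everything else is bookkeeping of degrees.
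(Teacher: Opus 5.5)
The paper gives no proof of this statement and simply cites \cite[Proposition 1]{NBC}. Your argument is the standard proof of that proposition and is correct: you use self-duality of $Rf_*\mathbb{Q}_X[\dim X]$ to move the summand into nonnegative perverse degree, then compare its generic stalk with the vanishing of $H^{>2d}(F_z;\mathbb{Q})$. One small slip: on $Z^\circ$ the summand $\mathrm{IC}_Z(\mathcal{L}')[-j]$ is concentrated in degree $j-\dim Z$, not $\dim Z+j$. The fibre degree $N-\dim Z+j$ you then use is nevertheless the right one, so the conclusion stands.
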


    The nontrivial local systems supported on the smooth locus $\mathcal{M}_{g,1^n}$ (resp.~$\mathcal{M}_{g,\epsilon^n}$) occurring in the decomposition theorem of $f_{g,n}:\overline{{\mathcal{M}}}_{g,1^{n+1}}\to\overline{{\mathcal{M}}}_{g,1^n}$ (resp.~$f_{g,\epsilon^n}:\overline{{\mathcal{M}}}_{g,\epsilon^{n+1}}\to\overline{{\mathcal{M}}}_{g,\epsilon^n}$) come from the nontrivial monodromy of the first cohomology of the universal curve family. One observation is that these local systems can be extended to the larger open subset $\mathcal{M}_{g,1^n}^{\mathrm{ct}}$ (resp.~$\mathcal{M}_{g,\epsilon^n}^{\mathrm{ct}}$) since a stable curve of compact type comes from a smooth curve by either adding some rational tails or contracting some cycles which are trivial cohomology classes in the first cohomology. Denote the local system on $\mathcal{M}_{g,1^n}^{\mathrm{ct}}$ by $\mathcal{L}_{g,n}$. The local system $\mathcal{L}_{g,n}$ is just the pullback of the local system $\mathcal{L}_{g,n-1}$ on $\mathcal{M}_{g,1^{n-1}}^{\mathrm{ct}}$ by the one-time forgetful map. We denote all these local systems by $\mathcal{L}_g$ if it causes no confusion and the local systems on $\mathcal{M}_{g,\epsilon^n}^{\mathrm{ct}}$ are also denoted by $\mathcal{L}_g$ for the same reason. For a general one-time forgetful map $f_{g,\mathcal{A}}:\overline{{\mathcal{M}}}_{g,\mathcal{A}\cup\epsilon}\to\overline{{\mathcal{M}}}_{g,\mathcal{A}}$ coming from the universal curve family, we denote the local system by $\mathcal{L}_{g,\mathcal{A}}$. Similarly when restricted to the compact type locus, $\mathcal{L}_{g,\mathcal{A\cup\epsilon}}$ is the pullback of $\mathcal{L}_{g,\mathcal{A}}$ along $f_{g,\mathcal{A}}$.

    The strata of $\overline{\mathcal{M}}_{g,\mathcal{A}}$ determined by the topological type of the fiber are in one-to-one correspondence with connected stable dual graphs of genus $g$ and $n$ legs with weight data $\mathcal{A}$. Although the ordinary dual graphs do not record collisions of marked points whose sum of weights is less than $1$, over each fixed dual-graph stratum, including its collision subloci, the universal curve is topologically locally trivial as a family of underlying curves. Hence, all $R^i{f_{g,\mathcal{A}}}_*\mathbb{Q}$  are local systems on that stratum. We denote the stratum corresponding to the stable dual graph $G$ by~$S_G$. The strata can be divided into two types depending on whether they are contained in $\mathcal{M}_{g,\mathcal{A}}^{\mathrm{ct}}$. A stratum is contained in $\mathcal{M}_{g,\mathcal{A}}^{\mathrm{ct}}$ if and only if its corresponding dual graph is a tree with $n$ legs. There is a special graph $G_{\mathcal{A},\mathrm{sing}}$ which consists of a single genus $g-1$ vertex, a self-loop and $n$ legs with weight data $\mathcal{A}$. The stratum $S_{G_{\mathcal{A},\mathrm{sing}}}$ is the locus of $1$-nodal curves.

    The decomposition theorem for the one-time forgetful map is of the following form with the notation defined above.
    \begin{theorem}\label{thm:ot}
        Let $n\geq 1$, and let $\mathcal{A}$ be weight data of $n$ weights with respect to genus $g$ in the interior of a fine chamber. Let $f_{g,n}:\overline{{\mathcal{M}}}_{g,1^{n+1}}\to\overline{{\mathcal{M}}}_{g,1^n}$ and $f_{g,\mathcal{A}}:\overline{{\mathcal{M}}}_{g,\mathcal{A}\cup\epsilon}\to\overline{{\mathcal{M}}}_{g,\mathcal{A}}$ be the forgetful maps. Then 
            \begin{multline}\label{ot1}
            R{f_{g,n}}_*\mathbb{Q}_{\overline{{\mathcal{M}}}_{g,1^{n+1}}}[3g-2+n]\\
            \simeq
            \mathbb{Q}_{\overline{{\mathcal{M}}}_{g,1^{n}}}[3g-2+n]\oplus{\mathbb{Q}_{\overline{\mathcal{M}}_{g,1^{n}}}[3g-4+n]}\oplus{\mathrm{IC}_{\overline{\mathcal{M}}_{g,1^n}}(\mathcal{L}_g)}\bigoplus_{G}\mathrm{IC}_{\overline{S}_G},
            \end{multline}
            \begin{multline}\label{ot2}
            R{f_{g,\mathcal{A}}}_*\mathbb{Q}_{\overline{{\mathcal{M}}}_{g,\mathcal{A}\cup\epsilon}}[3g-2+n]\\
            \simeq
            \mathbb{Q}_{\overline{{\mathcal{M}}}_{g,\mathcal{A}}}[3g-2+n]\oplus{\mathbb{Q}_{\overline{\mathcal{M}}_{g,\mathcal{A}}}[3g-4+n]}\oplus{\mathrm{IC}_{\overline{\mathcal{M}}_{g,\mathcal{A}}}(\mathcal{L}_{g,\mathcal{A}})}\bigoplus_{G}\mathrm{IC}_{\overline{S}_G}.
            \end{multline}
        Here the last direct sum is indexed by all stable trees $G$ with exactly one edge. The strata $S_G$ are precisely the codimension $1$ strata contained in the compact type part.
    \end{theorem}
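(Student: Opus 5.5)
The plan is to treat both \eqref{ot1} and \eqref{ot2} at once. In each case the forgetful map $f$ is the universal curve $\mathcal{C}\to\overline{\mathcal{M}}$. It is proper with fibers of pure dimension $1$, and the source is smooth (as a DM stack, so $\mathbb{Q}$-coefficients behave as for smooth spaces). By the Goresky--MacPherson inequality (Theorem~\ref{thm:gm}), every support of a simple summand of $Rf_*\mathbb{Q}[3g-2+n]$ has codimension $\le 1$. So the decomposition has the form
\[
Rf_*\mathbb{Q}[3g-2+n]\simeq \bigoplus_i \mathrm{IC}_{\overline{\mathcal{M}}}(\mathcal{V}_i)[-i]\oplus \bigoplus_{D} \mathrm{IC}_{D}(\mathcal{W}_D)[\ast],
\]
where $D$ runs over irreducible boundary divisors, i.e.\ closures of codimension-$1$ strata $S_G$ with $G$ a one-edge graph.

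\textbf{Generic part.} Over the open stratum, $f$ is a smooth proper family of genus $g$ curves. Its higher direct images are $\mathbb{Q}$ in degree $0$, $R^1f_*\mathbb{Q}$ in degree $1$, and $\mathbb{Q}(-1)$ in degree $2$. Hence the full-support part is $\mathbb{Q}[3g-2+n]\oplus\mathrm{IC}(\mathcal{L})\oplus\mathbb{Q}[3g-4+n]$. The constant summands are genuinely $\mathbb{Q}$ on the whole space, not just $\mathrm{IC}(\mathbb{Q})$, because $\overline{\mathcal{M}}$ is smooth. In degrees $0$ and $2$, relative hard Lefschetz with respect to a relatively ample class (e.g.\ $K_\pi+\sum a_is_i$, or $\omega$ in the Deligne--Mumford case) identifies these summands as shifts of one another.

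\textbf{Divisorial parts.} A summand supported on a divisor $D$ can only sit in the middle perverse degree. By relative hard Lefschetz this means cohomological degree $1$ relative to $\dim$, i.e.\ degree $2$ of the fiber at a general point of $D$. Moreover, since $D$ has codimension $1=d$, the extreme case of Goresky--MacPherson says that the local system $\mathcal{W}_D$ is the cokernel of $H^2(C_t)\to H^2(C_0)$. Here $C_0$ is a general fiber over $D$ and $C_t$ is a nearby smooth fiber; equivalently, the rank of $\mathcal{W}_D$ is the number of irreducible components of $C_0$ minus $1$. To justify this I would use the standard stalk computation: take the degree-$2\dim$-part of the stalk at a general point of $D$ and subtract the contribution of the full-support summands. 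The latter is computed from the degree-$2$ stalk of $\mathbb{Q}[3g-4+n]$, which is $\mathbb{Q}$, together with the stalk of $\mathrm{IC}(\mathcal{L})$ in the top allowed degree, which vanishes by the support condition.

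\textbf{Case analysis.} This rank computation splits into two cases.
\begin{itemize}
\item If $G$ is a one-edge tree, $C_0$ has two components. Then $\mathcal{W}_D$ has rank $1$, it is trivial (the two components are distinguishable globally over $S_G$), and the summand is $\mathrm{IC}_{\overline{S}_G}$ in the correct shift.
\item If $G=G_{\mathcal{A},\mathrm{sing}}$, $C_0$ is irreducible, so $H^2(C_0)=\mathbb{Q}$ and there is no new summand. The failure of $R^1f_*$ to be a local system there is absorbed by $\mathrm{IC}(\mathcal{L})$, by the local invariant cycle theorem.
\end{itemize}
For the weighted case one must check two further points. First, the strata of $\overline{\mathcal{M}}_{g,\mathcal{A}}$ whose fibers are reducible in codimension $1$ are exactly the one-edge trees; coincidences of points create no new components, which is why $\mathcal{A}$ is taken in the interior of a fine chamber. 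Second, the forgotten $\epsilon$ point never creates a rational tail.

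\textbf{Main obstacle.} I expect the delicate step to be the justification that, at a general point of a divisor, the new summand is exactly measured by the excess of $H^2$ of the fiber over $\mathbb{Q}$. This requires controlling the stalk of $\mathrm{IC}(\mathcal{L})$ in degree $2$ (i.e.\ $3g-2+n$ relative). I would first handle it by noting that, near a general point of $D$, the space is locally a product of $D$ with a disc. This reduces the question to a one-parameter degeneration, where $\mathrm{IC}(\mathcal{L})$ restricts to $j_*\mathcal{L}$ in degree $0$ only.
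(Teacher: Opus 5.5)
Your proposal is correct and takes essentially the same route as the paper. Both arguments use the following steps:
\begin{itemize}
\item Goresky--MacPherson, to reduce the possible supports to codimension at most $1$.
\item The strict support condition, to keep $\mathrm{IC}(\mathcal{L})$ out of $H^2$ of the fibre over a general point of a divisor.
\item The count $\dim H^2(C_0)-1$: one extra class for a one-edge tree, none for the irreducible one-nodal curve.
\item A duality argument to force divisor-supported summands into perverse degree $0$. The paper uses Verdier self-duality plus $H^3(C_0)=0$; you use relative hard Lefschetz, which does the same job.
\end{itemize}

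There are only minor slips to fix:
\begin{itemize}
\item The specialization map goes $H^2(C_0)\to H^2(C_t)$, so $\mathcal{W}_D$ is its kernel, not a cokernel.
\item The triviality of $\mathcal{W}_D$ for one-edge trees should be justified by $n\ge 1$: the labelled legs distinguish the two components. This is exactly what fails for $n=0$ with $g$ even.
\item The $\epsilon$ point does bubble off a rational component when it meets a weight-$1$ point. This does not affect your argument, since you only use that $f$ is the universal curve.
\end{itemize}
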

        \begin{proof}
        The proof relies on the property of the universal curve family. According to the wall-and-chamber structure proved in Proposition \ref{Ha}, we can identify $\overline{\mathcal{M}}_{g,1^n\epsilon}$ as $\overline{\mathcal{M}}_{g,1^{n+1}}$. Therefore, the case of $f_{g,n}:\overline{{\mathcal{M}}}_{g,1^{n+1}}\to\overline{{\mathcal{M}}}_{g,1^n}$ is a special case of $f_{g,\mathcal{A}}:\overline{{\mathcal{M}}}_{g,\mathcal{A}\cup\epsilon}\to\overline{{\mathcal{M}}}_{g,\mathcal{A}}$.
        
        Since the one-time forgetful map $f_{g,\mathcal{A}}:\overline{{\mathcal{M}}}_{g,\mathcal{A}\cup\epsilon}\to\overline{{\mathcal{M}}}_{g,\mathcal{A}}$ can be viewed as the universal curve family over $\overline{{\mathcal{M}}}_{g,\mathcal{A}}$ when $\mathcal{A}$ is in the interior of a fine chamber, the fully supported direct summands are exactly $\mathbb{Q}_{\overline{{\mathcal{M}}}_{g,\mathcal{A}}}[3g-2+n]$, ${\mathbb{Q}_{\overline{\mathcal{M}}_{g,\mathcal{A}}}[3g-4+n]}$ and ${\mathrm{IC}_{\overline{\mathcal{M}}_{g,\mathcal{A}}}(\mathcal{L}_{g,\mathcal{A}})}$ arising from the cohomology of the general fibers. By Theorem \ref{thm:gm}, other possible supports occurring in the decomposition theorem are divisors which are the closures of the codimension $1$ strata. Notice that the support condition for an intersection complex is stricter than that for ordinary perverse sheaves. Let $X$ be any algebraic variety and $\mathrm{IC}_X(\mathcal{L})$ be an intersection complex on $X$. The equality in the support condition 
        \begin{equation}\label{str}
        \dim\mathrm{supp}(\mathcal{H}^j(\mathrm{IC}_X(\mathcal{L})))\leq-j
        \end{equation} holds if and only if $j=-\dim X$ which implies that the support has codimension at least $2$ if $j\neq -\dim X$. We deduce from this that ${\mathrm{IC}_{\overline{\mathcal{M}}_{g,\mathcal{A}}}(\mathcal{L}_{g,\mathcal{A}})}$ only contributes to the first cohomology of the general fiber over a divisor. For any stable tree $G$ with only one edge, the fiber over $S_G$ is a curve with two irreducible components. The second cohomology of the fiber is $2$-dimensional. The fully supported direct summands only contribute a $1$-dimensional cohomology group to the second cohomology of the fiber. Therefore, there is a direct summand supported on $\overline{S}_G$. The second cohomology group counts the irreducible components of the fiber. Since there is at least one marked point, the two irreducible components are distinguished by the marked point. Therefore, the monodromy arising from the second cohomology is trivial. Hence, the direct summand supported on the divisor is the intersection complex $\mathrm{IC}_{\overline{S}_G}$.
        
        Next, we prove that these are all the direct summands that can appear. For any stable tree $G$ with only one edge, the contribution of these direct summands to the cohomology of the fibers over $S_G$ is already $1$-dimensional on the zeroth cohomology and $2$-dimensional on the second cohomology. We deduce from this that any other direct summand supported on $\overline{S}_G$ has the form $\mathrm{IC}_{\overline{S}_G}(\mathcal{L})[1]$. Notice that $R{f_{g,\mathcal{A}}}_*\mathbb{Q}_{\overline{{\mathcal{M}}}_{g,\mathcal{A}\cup\epsilon}}[3g-2+n]$ is self-dual under the Verdier duality. Therefore, $\mathrm{IC}_{\overline{S}_G}(\mathcal{L}^\vee)[-1]$ is also a direct summand. However, this indicates that the third cohomology of the fiber is nontrivial and we find a contradiction. For the graph $G_{\mathcal{A},\mathrm{sing}}$, the fiber over $S_{G_{\mathcal{A},\mathrm{sing}}}$ is an irreducible $1$-nodal curve. Its second cohomology is $1$-dimensional which is already obtained from $\mathbb{Q}_{\overline{\mathcal{M}}_{g,1^n}}[3g-4+n]$. If there is a direct summand supported on $\overline{S}_{G_{\mathcal{A},\mathrm{sing}}}$, we can apply the same argument to deduce the existence of $\mathrm{IC}_{\overline{S}_{G_{\mathcal{A},\mathrm{sing}}}}(\mathcal{L}^\vee)[-1]$ as a direct summand which is also a contradiction. Hence, the direct summands supported on divisors are $\mathrm{IC}_{\overline{S}_G}$ where $G$ ranges over all stable trees with only one edge. 
        \end{proof}
        When $n=0$ and $g=2k\geq2$ is even, the formula is slightly different because of the symmetry of the divisor $\mathrm{Sym}^2(\overline{\mathcal{M}}_{k,1})$. The direct summand supported on $\mathrm{Sym}^2(\overline{\mathcal{M}}_{k,1})$ is $\mathrm{IC}_{\mathrm{Sym}^2(\overline{\mathcal{M}}_{k,1})}(\mathbb{Q}_{\mathrm{sign}})$ where the monodromy of $\mathbb{Q}_{\mathrm{sign}}$ arises from the $S_2$ symmetry. When $n=0$ and $g$ is odd, the two irreducible components have different genus. The formula is the same as that in Theorem \ref{thm:ot}. Hence, we obtain the following formula for $n=0$ and $g\geq2$.

\begin{theorem}
    Let $g\geq2$ be the genus and let $f_{g,0}:\overline{{\mathcal{M}}}_{g,1}\to\overline{{\mathcal{M}}}_{g}$ be the forgetful map. If $g=2k$ is even, then
            \begin{multline}\label{even}
            R{f_{g,0}}_*\mathbb{Q}_{\overline{{\mathcal{M}}}_{g,1}}[3g-2]\\
            \simeq
            \mathbb{Q}_{\overline{{\mathcal{M}}}_{g}}[3g-2]\oplus{\mathbb{Q}_{\overline{\mathcal{M}}_{g}}[3g-4]}\oplus{\mathrm{IC}_{\overline{\mathcal{M}}_{g}}(\mathcal{L}_g)}\bigoplus_{G}\mathrm{IC}_{\overline{S}_G}\oplus{\mathrm{IC}_{\mathrm{Sym}^2(\overline{\mathcal{M}}_{k,1})}(\mathbb{Q}_{\mathrm{sign}})}.
            \end{multline}
        Here the direct sum is indexed by all stable trees $G$ with exactly one edge except the tree with two genus $k$ vertices. The strata $S_G$ are precisely the codimension $1$ strata contained in the compact type part except $\mathrm{Sym}^2(\mathcal{M}_{k,1})$.

        If $g$ is odd, then 
             \begin{equation*}
            R{f_{g,0}}_*\mathbb{Q}_{\overline{{\mathcal{M}}}_{g,1}}[3g-2]\\
            \simeq
            \mathbb{Q}_{\overline{{\mathcal{M}}}_{g}}[3g-2]\oplus{\mathbb{Q}_{\overline{\mathcal{M}}_{g}}[3g-4]}\oplus{\mathrm{IC}_{\overline{\mathcal{M}}_{g}}(\mathcal{L}_g)}\bigoplus_{G}\mathrm{IC}_{\overline{S}_G}.
            \end{equation*}
        Here the direct sum is indexed by all stable trees $G$ with exactly one edge. The strata $S_G$ are precisely the codimension $1$ strata contained in the compact type part.
\end{theorem}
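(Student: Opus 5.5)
The plan is to run the argument of Theorem \ref{thm:ot} again with $n=0$; the only new feature is the even-genus symmetric divisor. First, $\overline{\mathcal{M}}_{g,1}\to\overline{\mathcal{M}}_g$ is the universal curve family, with fibers of pure dimension $1$, and $\overline{\mathcal{M}}_{g,1}$ is smooth as a stack (rationally smooth as a coarse space). So Theorem \ref{thm:gm} applies and every support has codimension at most $1$.

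The fully supported summands come from the cohomology of a general fiber, a smooth genus $g$ curve. They are $\mathbb{Q}[3g-2]$ and $\mathbb{Q}[3g-4]$ from $H^0$ and $H^2$, and $\mathrm{IC}_{\overline{\mathcal{M}}_g}(\mathcal{L}_g)$ from $H^1$. By the strict support condition \eqref{str}, this intersection complex contributes nothing to $H^2$ of fibers over codimension-one strata.

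The divisors are $\Delta_{\mathrm{irr}}$, the closure of $S_{G_{\mathrm{sing}}}$, and $\Delta_i$ for $1\le i\le \lfloor g/2\rfloor$, the closure of the stratum of one-edge trees with vertices of genus $i$ and $g-i$.

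Over $\Delta_{\mathrm{irr}}$ the general fiber is an irreducible $1$-nodal curve. Its $H^2$ is one-dimensional and is already accounted for by the full summands. The Verdier self-duality argument from the proof of Theorem \ref{thm:ot} excludes any extra summand here, because such a summand would force $H^3$ of the fiber to be nonzero.

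Over $\Delta_i$ the general fiber has two components, so $H^2$ is two-dimensional while the full summands contribute only one dimension. Hence there is a summand of the form $\mathrm{IC}_{\overline{\Delta}_i}(\mathcal{M})$, where $\mathcal{M}$ is the rank-one local system given by the cokernel of $H^2$ of the generic fiber, i.e. $H^2(\text{fiber})$ modulo the class coming from the full summand. Shifted summands $\mathrm{IC}(\mathcal{L})[\pm1]$ are excluded by the same duality argument, so this is the only summand on $\Delta_i$.

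The main step is identifying the monodromy of $\mathcal{M}$. The local system $H^2(\text{fiber})\cong \mathbb{Q}^{\{\text{components}\}}$ is the permutation representation of the monodromy acting on the two components. The full summand accounts for the trivial quotient, which is the sum, or equivalently the trace, map. So $\mathcal{M}$ is the kernel of the sum map, i.e. the anti-invariant part.

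If $i\neq g-i$, in particular always when $g$ is odd, the two components have different genera and cannot be exchanged. The monodromy is then trivial, giving $\mathrm{IC}_{\overline{S}_G}$.

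If $g=2k$ and $i=k$, the open stratum is $(\mathcal{M}_{k,1}\times\mathcal{M}_{k,1})/S_2$. I would check that the monodromy really swaps the components by taking a loop in $\mathcal{M}_{k,1}\times\mathcal{M}_{k,1}$ from $(C_1,C_2)$ to $(C_2,C_1)$. Such a loop exists because $\mathcal{M}_{k,1}$ is connected, and its image is a closed loop in the quotient along which the components are exchanged. The anti-invariant line is therefore the sign representation of $S_2$, giving $\mathrm{IC}_{\mathrm{Sym}^2(\overline{\mathcal{M}}_{k,1})}(\mathbb{Q}_{\mathrm{sign}})$.

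One subtlety to handle carefully is that the stratum is a stack quotient, so the orbifold fundamental group should be used. The sign character factors through the $S_2$ quotient, so this causes no trouble.

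Assembling the full summands, the trivial IC's on the $\Delta_i$ with $i\neq k$, and the sign IC in the even case gives both formulas.
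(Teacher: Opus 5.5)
Your proposal is correct and follows essentially the same route as the paper: rerun the proof of Theorem \ref{thm:ot} with $n=0$. That means the Goresky--MacPherson bound, the full summands read off from the generic fiber, and the duality argument excluding shifted summands and any summand on the irreducible-nodal divisor, with the only new point being that on $\mathrm{Sym}^2(\overline{\mathcal{M}}_{k,1})$ the extra rank-one piece of $H^2$ is the complement of the trivial line in the permutation representation on the two components. Your explicit loop in $\mathcal{M}_{k,1}\times\mathcal{M}_{k,1}$ exchanging the factors shows this complement is $\mathbb{Q}_{\mathrm{sign}}$, and that it is trivial when the two genera differ, which fills in a step the paper only asserts in one line.
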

\begin{rem}\label{RM}The condition that the weight data $\mathcal{A}$ is in the interior of a fine chamber is necessary. It guarantees that the forgetful map can be viewed as the morphism of the universal curve family, which is wrong if we take $\mathcal{A}=(1/3,1/3,1/3)$.
    The results for $f_{g,n}:\overline{{\mathcal{M}}}_{g,1^{n+1}}\to\overline{{\mathcal{M}}}_{g,1^n}$ and $f_{g,\epsilon^n}:~\overline{{\mathcal{M}}}_{g,\epsilon^{n+1}}\to\overline{{\mathcal{M}}}_{g,\epsilon^n}$ have the same form. However, after replacing the weight of each point with $\epsilon$, the number of possible stable dual graphs decreases and no rational tails occur in the case of~$f_{g,\epsilon^n}$. The key difference is that for any stable graph $G$ with at least one edge and any $m$, the stratum closure $\overline{S}_G\subset \overline{{\mathcal{M}}}_{g,\epsilon^m}$ is never mapped surjectively to $\overline{{\mathcal{M}}}_{g,\epsilon^n}$ by $f_{g,\epsilon^n,m-n}:~\overline{{\mathcal{M}}}_{g,\epsilon^{m}}\to~\overline{{\mathcal{M}}}_{g,\epsilon^n}$ while this is possible for some stable graphs with one genus $g$ vertex and rational tails for $\overline{\mathcal{M}}_{g,1^m}$.
\end{rem}
\section{Proof of the main theorem}
In this section, we determine the supports occurring in the decomposition theorem for an arbitrary forgetful map $f_{g,\mathcal{A},m-n}:\overline{{\mathcal{M}}}_{g,\mathcal{A}\cup\epsilon^{m-n}}\to\overline{{\mathcal{M}}}_{g,\mathcal{A}}$ when $\mathcal{A}$ is in the interior of a fine chamber by restricting the map to the moduli space of compact type curves and its complement. Here, the sufficiently small weight $\epsilon$ depends on the integer $m$. We require that $(m-n)\epsilon_m$ is sufficiently small such that for any subset $I\subset\{1,2,\ldots,n\}$, if the sum of weights in $I$ is less than~$1$, then $\sum_{i\in I}a_i+(m-n)\epsilon_m<1$. In particular, when considering $\mathcal{A}=\epsilon_m^m$, for every fixed $m$, the weight $\epsilon_m$ should satisfy that $m\epsilon_m<1$. The sufficiently small weight is always assumed to satisfy this property when concerning different $m$ and we denote $\epsilon_m$ by $\epsilon$ if it causes no confusion in this section. The key property of this choice of $\epsilon$ is that each time we forget a weight $\epsilon$ marked point, we obtain a morphism of the universal curve family. Therefore, we can repeatedly apply the formula \eqref{ot2} for one-time forgetful maps on the moduli space of compact type curves while on the complement divisor, we determine the supports by computing the cohomology of the singular fiber. The computation is reduced to the case of $f_{g,\epsilon^n,m-n}:\overline{{\mathcal{M}}}_{g,\epsilon^{m}}\to\overline{{\mathcal{M}}}_{g,\epsilon^n}$ via the wall-crossing formula for arbitrary weight data $\mathcal{A}$ of $n$ weights. We show that we can obtain all the possible supports from two specific high-dimensional supports when $m$ is sufficiently large. The theorem for $f_{g,n,m-n}:\overline{\mathcal{M}}_{g,1^m}\to \overline{\mathcal{M}}_{g,1^n}$ can be recovered by the same method as in the proof for $f_{g,\mathcal{A},m-n}:\overline{{\mathcal{M}}}_{g,\mathcal{A}\cup\epsilon^{m-n}}\to\overline{{\mathcal{M}}}_{g,\mathcal{A}}$.
\subsection{The decomposition theorem for the moduli spaces of compact type curves}
In this subsection, we prove that all possible supports occur in the decomposition theorem for $f_{g,\mathcal{A},m-n}:{\mathcal{M}}^{\mathrm{ct}}_{g,\mathcal{A}\cup\epsilon^{m-n}}\to{\mathcal{M}}^{\mathrm{ct}}_{g,\mathcal{A}}$.
\begin{theorem}\label{CT}
    Let $g\geq0$ be the genus and $\mathcal{A}$ be the weight data of $n$ weights in the interior of a fine chamber. For every fixed integer $n$, when $m$ is sufficiently large, every stratum closure appears as the support in the decomposition theorem for ${Rf_{g,\mathcal{A},m-n}}_*\mathbb{Q}_{{\mathcal{M}}^{\mathrm{ct}}_{g,\mathcal{A}\cup\epsilon^{m-n}}}[3g-3+m]$.
\end{theorem}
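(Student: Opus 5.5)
We prove Theorem \ref{CT} by induction on the number $m-n$ of forgotten points. The forgetful map factors as a composition of one-time forgetful maps
\[
{\mathcal{M}}^{\mathrm{ct}}_{g,\mathcal{A}\cup\epsilon^{m-n}}\to{\mathcal{M}}^{\mathrm{ct}}_{g,\mathcal{A}\cup\epsilon^{m-n-1}}\to\cdots\to{\mathcal{M}}^{\mathrm{ct}}_{g,\mathcal{A}}.
\]
Because $(m-n)\epsilon$ is small, each intermediate weight datum $\mathcal{A}\cup\epsilon^k$ lies in the interior of a fine chamber, so each step is the universal curve over the compact type locus and formula \eqref{ot2} applies at every step.

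Pushing forward step by step, the summands of $Rf_*\mathbb{Q}[3g-3+m]$ are obtained by repeatedly applying one-time pushforwards to the summands created at earlier stages. We need a rule for how a summand $\mathrm{IC}_{\overline{S}_G}(\mathcal{L})$ on ${\mathcal{M}}^{\mathrm{ct}}_{g,\mathcal{A}\cup\epsilon^{k+1}}$ pushes forward to level $k$. The stratum $S_G$, for $G$ a stable tree, maps under $f_{g,\mathcal{A}\cup\epsilon^k}$ in one of two ways.
\begin{enumerate}
\item[(a)] It maps finitely and birationally onto a stratum $S_{G'}$, where $G'$ is $G$ with the last leg removed and no contraction is needed. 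Then the pushforward contains $\mathrm{IC}_{\overline{S}_{G'}}(\mathcal{L})$.
\item[(b)] It maps with one-dimensional fibers, which happens when the leg can be moved freely along a component. Then $\overline{S}_G$ is itself the universal curve over a product of smaller moduli spaces, $\prod_v \overline{\mathcal{M}}^{\mathrm{ct}}_{g_v,\ldots}$. Formula \eqref{ot2} applied to the factor carrying the leg produces new one-edge-degenerations of that factor, i.e.\ strata $S_{G''}$ where $G''$ is $G'$ with one additional edge.
\end{enumerate}
We prove by induction on $k$ a support statement: every tree $G$ with at most $e(k)$ edges, where $e(k)$ grows with $k$, appears as a support at stage $m=n+k$.

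The key step is showing that a given stable tree $G$ with $n$ legs and weights $\mathcal{A}$ is reached. We construct a sequence of one-edge degenerations $G_0=$ (single vertex) $\to G_1\to\cdots\to G_r=G$, each step splitting a vertex by adding one edge. At each step we use an extra $\epsilon$-point to realise the split in \eqref{ot2}: $\overline{S}_{G_i}$ with one extra point yields $\overline{S}_{G_{i+1}}$, with the extra point placed on the new component to guarantee stability, or on a genus $\geq1$ component. Finitely many such extra points suffice, namely on the order of the number of edges of trees with $n$ legs, which is bounded by roughly $2g-2+n$. Taking $m-n$ at least that number, we argue that the summand supported on $\overline{S}_G$ survives the remaining pushforwards. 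For this we use a fully supported-type summand on $\overline{S}_G$ pulled back from level $n$ to level $m$, via the factor $\mathbb{Q}[\ldots]$ in \eqref{ot2}, which is always present with the trivial coefficient.

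The main obstacle is ensuring that no cancellation occurs. The decomposition theorem is a direct sum, so summands cannot cancel; but we must check that the pushforward of $\mathrm{IC}_{\overline{S}_G}(\mathcal{L})$ genuinely contains a summand with the claimed support, rather than only ones on smaller strata. To handle this, we identify $\overline{S}_G\cap{\mathcal{M}}^{\mathrm{ct}}$ via the gluing map with a product of compact type moduli spaces. We then apply \eqref{ot2} factorwise and use the Künneth formula: the pushforward of an external tensor product is the tensor product of the pushforwards, and the tensor product of the $\mathbb{Q}[\ldots]$ summand on one factor with $\mathrm{IC}_{\overline{S}_{G}}$ on another remains a nonzero summand supported on the product stratum. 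The remaining care is in the case $n=0$ with symmetric trees, where local systems such as $\mathbb{Q}_{\mathrm{sign}}$ may appear; here the support is still realised, only with a nontrivial coefficient, which suffices since the theorem concerns supports alone.
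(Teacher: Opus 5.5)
\textbf{Gap: graph automorphisms at intermediate stages.} Your overall strategy is the paper's: factor into one-time forgetful maps, lift each summand to the product of vertex moduli spaces, apply \eqref{ot2} on the factor carrying the forgotten leg, and build $G$ by one vertex-splitting per forgotten point. The gap is in how you treat graph automorphisms.

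Your propagation step (b) and your K\"unneth argument need $\mathrm{IC}_{\overline{S}_{G_i}}\simeq R\pi_*\mathbb{Q}[d]$, where $\pi$ is the gluing map from the product of vertex moduli spaces. This holds only if $\pi$ is generically injective, that is, only if $G_i$ has no nontrivial automorphism fixing the legs. Otherwise $R\pi_*\mathbb{Q}[d]$ is the $\mathrm{IC}$ of a permutation local system and splits into isotypic pieces.

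Such graphs are not confined to $n=0$ or to the final step. Take $g=3$, $n=1$, and let $G$ be a genus-$0$ vertex carrying leg $1$, joined to three leg-free genus-$1$ vertices. Every one-edge contraction of $G$, with the forgotten leg on the merged vertex, still has two interchangeable genus-$1$ leaves. So a pure splitting chain ending at $G$ must pass through a symmetric intermediate graph; you place extra points only for stability, not to prevent this.

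At such a stage you only know that some $\mathrm{IC}_{\overline{S}_{G_i}}(\mathcal{L})$ is a summand. For nontrivial $\mathcal{L}$ you cannot apply \eqref{ot2} factorwise. You would instead have to show that the relevant isotypic part of each newly split summand is nonzero, and this is not automatic: an automorphism exchanging the two new vertices acts by $-1$ on the class separating the two fibre components. So ``the support is still realised, only with a nontrivial coefficient'' is enough at the last step but does not let your induction continue.

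\textbf{The missing idea is symmetry breaking.} The paper first adds one weight-$\epsilon$ leg to every vertex of $G$, obtaining $\tilde{G}$ with $r=e+1$ auxiliary legs, which are forgotten last. Every contraction of $\tilde{G}$ then has a leg on each vertex, so, being a tree, it has no nontrivial automorphism. Hence every intermediate gluing map is finite and generically injective, every intermediate summand is an $\mathrm{IC}$ with trivial coefficient, and \eqref{ot2} can be reapplied at each of the $e$ splitting steps. At the end, forgetting the $r$ auxiliary legs maps $\overline{S}_{\tilde{G}}$ surjectively onto $\overline{S}_G$. The decomposition theorem for this surjection then gives a summand supported on $\overline{S}_G$, whatever its coefficient. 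The total cost is $2e+1\le 6g-5+2n$ points.

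An equivalent repair would be never to descend to $\overline{S}_{G_i}$ at intermediate stages. Instead, carry along the constant sheaf on the product of vertex moduli spaces, pushed forward by the composite finite gluing map, which remains a direct summand at every stage. That would have to be set up explicitly.

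\textbf{Minor point: case (a).} As stated it is inconsistent. If $G'$ is $G$ minus the leg with no contraction, then $\dim S_{G'}=\dim S_G-1$, so the map cannot be finite and birational. The finite birational case is exactly the one where removing the leg destabilises a genus-$0$ vertex that must be contracted. Neither argument needs this case.
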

\begin{proof}
    Every possible support in the decomposition of ${Rf_{g,\mathcal{A},m-n}}_*\mathbb{Q}_{{\mathcal{M}}^{\mathrm{ct}}_{g,\mathcal{A}\cup\epsilon^{m-n}}}[3g-3+m]$ corresponds to a stable tree of genus $g$ with $n$ legs with weight data $\mathcal{A}$. We prove that we can obtain all these stable trees from the graph $G_{0,\mathcal{A}\cup\epsilon^{m-n}}$ which consists of a single genus $g$ vertex with $m$ legs (with weight data $\mathcal{A}\cup\epsilon^{m-n}$) attached to it. 
    
    We explain what new trees we obtain from the direct summand $\mathrm{IC}_{\overline{S}_G}[c]$ (the closure is taken inside the moduli spaces of the compact type curves) after being pushed forward by the one-time forgetful map, where $G$ is a tree-type graph and does not have a nontrivial automorphism. Here, the automorphism of the graph is a bijection on both the set of vertices and the set of edges which preserves the relation between edges and vertices. Moreover, we require the automorphism to fix legs, i.e., the vertex that the leg is attached to remains unchanged under the automorphism. The latter condition corresponds to the requirement that the automorphism of a stable pointed curve should fix the marked points. In this case, we can apply the formula for the one-time forgetful maps to $R{f_{g,\mathcal{A}}}_*\mathrm{IC}_{\overline{S}_{G}}$ via the following commutative diagram
$$\begin{tikzcd}
   \tilde{\overline{S}_{G}}  \arrow[r,"\pi"] \arrow[d,"\tilde{f}"] &\overline{S}_{G} \arrow[d,"f"] \\ \tilde{\overline{S}_{G'}} \arrow[r,"\pi'"] &\overline{S}_{G'}.
\end{tikzcd}$$
Here, the graph $G'$ is obtained from $G$ by deleting the weight $\epsilon$ leg which is forgotten by the one-time forgetful map and then stabilizing. The map $\pi$ is the normalization map of $\overline{S}_{G}$ and $\pi'$ is similarly defined. The normalization $\tilde{\overline{S}_{G}}$ is a fiber product of moduli spaces of stable weighted pointed curves and the map $\tilde{f}$ is the lift of the one-time forgetful map to the normalization. Since the graph $G$ has no nontrivial automorphism, the normalization map $\pi$ is a finite map and generically one-to-one. Since a finite birational map is small, one obtains \begin{equation*}
    R\pi_*\mathbb{Q}_{\tilde{\overline{S}_{G}}}[\dim \tilde{\overline{S}_{G}}]\simeq \mathrm{IC}_{\overline{S}_{G}}.
\end{equation*}
Therefore, we find that 
\begin{equation*}
    Rf_*\mathrm{IC}_{\overline{S}_{G}}\simeq R{\pi'}_* R{\tilde{f}_*}\mathbb{Q}_{\tilde{\overline{S}_{G}}}[\dim \tilde{\overline{S}_{G}}].
\end{equation*}
The lifted forgetful map $\tilde{f}$ is the one-time forgetful map on the factor where the forgotten marked point lies and the identity on other factors. Hence, we can apply \eqref{ot2} to the lifted forgetful map. Let $v$ be the vertex of genus $\tilde{g}$ such that the forgotten leg is attached to $v$. By the formula for the one-time forgetful map \eqref{ot2}, a new graph is obtained from $G$ by deleting the forgotten leg and then splitting the vertex $v$ into at most two vertices whose sum of genus is exactly $\tilde{g}$. Every original edge incident to $v$ is connected to one of the new vertices and the legs originally attached to $v$ are attached to the new vertices in the new graph. Finally, we need to stabilize the graph. Since the map $\pi'$ is also finite, after applying $R{\pi'}_*$, we obtain $\overline{S}_{\tilde{G}}$ occurring as the support, where $\tilde{G}$ is the new graph. Moreover, if the new graph $\tilde{G}$ also carries no nontrivial automorphism, then $\pi'$ is also generically one-to-one on $\overline{S}_{\tilde{G}}$ and hence, the direct summand supported on $\overline{S}_{\tilde{G}}$ can be chosen as the shifted intersection complex with the trivial coefficient by \eqref{ot2}. Therefore, we can repeat this procedure if the assumption on the symmetry of the graphs still holds.

\begin{figure}[H]
\begin{tikzpicture}[
    vertex/.style={circle, draw, fill=black, inner sep=2pt, minimum size=4pt}
  ]
  \begin{scope}
  \node [vertex, label=below:$\tilde{g}$] (A) at (0,0) {};
  \node [vertex, label=below:] (B) at (-1,1) {};
  \node [vertex, label=below:] (C) at (-1,-1) {};
  \node [vertex, label=below:] (D) at (1,1) {};

  \draw (A) -- (B);
  \draw (A) -- (C);
  \draw (A) -- (D);
  \draw (B) -- (-1.7,1.7);
  \draw (A) -- (0.5,-0.86);
  \draw (A) -- (0.86,-0.5);
  \draw (A) -- (1,0);
  \draw[dashed] (A) -- (0,1);
  \end{scope}
  \draw[->, thick] (2,0.5) -- (4,1.5);
  \draw[->, thick] (2,-0.5) -- (4,-1.5);
  \begin{scope}[xshift=6cm, yshift=2cm]
  \node [vertex, label=below:$g_1$] (A) at (0,0) {};
  \node [vertex, label=below:$g_2$] (E) at (1,0) {};
  \node [vertex, label=below:] (B) at (-1,1) {};
  \node [vertex, label=below:] (C) at (-1,-1) {};
  \node [vertex, label=below:] (D) at (2,1) {};

  \draw (A) -- (B);
  \draw (A) -- (C);
  \draw (E) -- (D);
  \draw (A) -- (E);
  \draw (B) -- (-1.7,1.7);
  \draw (A) -- (0.5,-0.86);
  \draw (A) -- (0.86,-0.5);
  \draw (E) -- (2,0);
  \end{scope}
  \begin{scope}[xshift=6cm, yshift=-2cm]
  \node [vertex, label=below:$\tilde{g}$] (A) at (0,0) {};
  \node [vertex, label=below:] (B) at (-1,1) {};
  \node [vertex, label=below:] (C) at (-1,-1) {};
  \node [vertex, label=below:] (D) at (1,1) {};

  \draw (A) -- (B);
  \draw (A) -- (C);
  \draw (A) -- (D);
  \draw (B) -- (-1.7,1.7);
  \draw (A) -- (0.5,-0.86);
  \draw (A) -- (0.86,-0.5);
  \draw (A) -- (1,0);
  \end{scope}
\end{tikzpicture}
\caption{The black dots stand for vertices and a line stands for an edge if it connects two vertices or for a leg if it starts from a vertex and has no end. The dashed line corresponds to the marked point forgotten by the forgetful map. The figure shows how the graph changes under the pushforward of the one-time forgetful map. Here $g_1+g_2=\tilde{g}$.}
  \end{figure}

Consequently, for any stable tree $G$, we first add an extra weight $\epsilon$ leg to each vertex to obtain a new graph $\tilde{G}$. This breaks the symmetry of the graph. Then, we can choose an edge in $\tilde{G}$, glue the two vertices connected by the edge together into a new vertex and add a weight $\epsilon$ leg to it. The graph $\tilde{G}$ can be obtained from this new graph. Denote the number of vertices in $G$ by~$r$. Therefore, the stable tree $\tilde{G}$ of genus $g$ with $n+r$ legs with weight data $\mathcal{A}\cup\epsilon^r$ can be contracted to a graph with a single point and no edges within $3g-3+n$ steps (there are at most $3g-3+n$ edges since the number of edges $e$ is exactly the codimension of the stratum). After forgetting $r=e+1$ added marked points, $\overline{S}_{\tilde{G}}$ is mapped surjectively to $\overline{S}_G$.  Hence, if $m-n\geq6g-5+2n$, every stable tree can be obtained from $G_{0,\mathcal{A}\cup\epsilon^{m-n}}$.
\end{proof}

We are particularly interested in the fully supported direct summands in the decomposition theorem for ${f_{g,\mathcal{A},m-n}}$ since they are the only direct summands which can contribute to the stratum $S_{\mathcal{A},G_{\mathrm{sing}}}$ after extending to the whole moduli spaces. In the special case when $\mathcal{A}=~\epsilon^n$, an observation is that the direct summand supported on the open stratum $\mathcal{M}_{g,\mathcal{A}}^{\mathrm{ct}}$ in the decomposition theorem for $f_{g,\mathcal{A},m-n}:\overline{{\mathcal{M}}}_{g,\mathcal{A}\cup\epsilon^{m-n}}\to\overline{{\mathcal{M}}}_{g,\mathcal{A}}$ always comes from the pushforward of a fully supported direct summand by the one-time forgetful map. The reason is that for any stable graph $G$ with at least one edge, the stratum closure $\overline{S}_G\subset \overline{{\mathcal{M}}}_{g,\mathcal{A}\cup\epsilon^{m-n}}$ is never mapped surjectively to $\overline{{\mathcal{M}}}_{g,\mathcal{A}}$ by $f_{g,\mathcal{A},m-n}:\mathcal{M}_{g,\mathcal{A}\cup\epsilon^{m-n}}^{\mathrm{ct}}\to\mathcal{M}_{g,\mathcal{A}}^{\mathrm{ct}}$ for any $m$ as mentioned in Remark \ref{RM}. Since the local system $\mathcal{L}_{g,\mathcal{A}\cup\epsilon}$ on $\mathcal{M}_{g,\mathcal{A}\cup\epsilon}^{\mathrm{ct}}$ can be viewed as the pullback of the local system $\mathcal{L}_{g,\mathcal{A}}$ on  $\mathcal{M}_{g,\mathcal{A}}^{\mathrm{ct}}$, we can apply the projection formula to the pushforward of $\mathcal{L}_{g,\mathcal{A}\cup\epsilon}$ and find that
\begin{equation*}
R{f_{g,\mathcal{A}}}_*\mathcal{L}_{g,\mathcal{A}\cup\epsilon}\simeq\mathcal{L}_{g,\mathcal{A}}\oplus\mathcal{L}_{g,\mathcal{A}}^{\otimes2}[-1]\oplus\mathcal{L}_{g,\mathcal{A}}[-2]\oplus{S}\quad \textup{in}\quad D_c^b(\mathcal{M}^{\mathrm{ct}}_{g,\mathcal{A}}),
\end{equation*}
where $S$ is the direct sum of all direct summands that are not fully supported. Therefore, we can determine all the fully supported direct summands which occur in the decomposition theorem for $f_{g,\mathcal{A},m-n}:~\overline{\mathcal{M}}_{g,\mathcal{A}\cup\epsilon^{m-n}}\to\overline{\mathcal{M}}_{g,\mathcal{A}}$ through this recursive structure when the weight data $\mathcal{A}=\epsilon^n$. 

\begin{rem}\label{g2}
    In genus $2$, the only two stable trees with no legs are the graph $G_0$ which consists of only one genus $2$ vertex and the graph $G_1$ which consists of two genus $1$ vertices and an edge connecting them. The corresponding stratum closures in $\mathcal{M}_2^{\mathrm{ct}}$ are exactly $\mathcal{M}_2^{\mathrm{ct}}$ and $\mathrm{Sym}^2(\mathcal{M}_{1,1})$ which are the only possible supports in the decomposition theorem. By Theorem \ref{CT}, we find that both $\mathcal{M}_2^{\mathrm{ct}}$ and $\mathrm{Sym}^2(\mathcal{M}_{1,1})$ appear in the decomposition theorem for $\mathcal{M}^{\mathrm{ct}}_{2,m}\to \mathcal{M}_2^{\mathrm{ct}}$ whenever $m\geq1$ since the number of edges of a genus $2$ stable tree with no legs is at most $1$. This recovers the result about supports in genus $2$ by Petersen in \cite[Theorem 2.1]{PE}.
\end{rem}

\subsection{
  The supports of the decomposition theorem contained in
  \texorpdfstring{$\overline{S}_{\mathcal{A},G_{\mathrm{sing}}}$}{S(A,G-sing)}
} In this subsection, we assume that the genus $g$ is nonzero as every stable curve of genus $0$ is of compact type.

We first study the special case when $\mathcal{A}=(\epsilon^n)$. If a direct summand occurring in the decomposition theorem for $f_{g,\epsilon^n,m-n}$ contributes nontrivially to the cohomology of the fiber over the stratum $S_{G_{\epsilon^n,\mathrm{sing}}}$, its support must be $\overline{\mathcal{M}}_{g,\epsilon^n}$ or $\overline{S}_{G_{\epsilon^n,\mathrm{sing}}}=\overline{\mathcal{M}}_{g,\epsilon^n}\backslash \mathcal{M}_{g,\epsilon^n}^{\mathrm{ct}}$. We can determine whether there is a direct summand supported on $\overline{S}_{G_{\epsilon^n,\mathrm{sing}}}$ by comparing the cohomology of the singular fiber with the contribution of these intersection complexes. Let $C$ be a $1$-nodal curve parametrized by a point in the stratum $S_{G_{\epsilon^n,\mathrm{sing}}}$. Then $C$ is obtained by gluing two points on a smooth curve of genus $g-1$. Let $p$ be the unique node. Let $\tilde{C}$ be a stable curve with $m$ weight~$\epsilon$ marked points such that the curve $\tilde{C}$ becomes $C$ after we forget the last $m-n$ marked points and then stabilize the curve. The forgotten marked points must lie in the smooth locus of $C$ or on some unstable irreducible component which is contracted during the stabilization. Since these marked points are of weight $\epsilon$, the extra irreducible components of $\tilde{C}$ form a chain of $\mathbb{P}^1$. The two $\mathbb{P}^1$ at the ends of the chain are each glued to the genus $g-1$ component in $\tilde{C}$ at a point. If there are no extra irreducible components, the genus $g-1$ component is just glued with itself at two points. 

\begin{figure}[H]
\begin{tikzpicture}[
    vertex/.style={circle, draw, fill=black, inner sep=2pt, minimum size=4pt}
  ]
  \begin{scope}
      \node [vertex, label=below:$g-1$] (A) at (0,0) {};
      \draw (A) -- (-0.7,0.7);
      \draw[dashed] (A) -- (0.7,0.7);
      \draw (A) -- (-0.7,-0.7);
      \draw[dashed] (A) -- (0.7,-0.7);
  \end{scope}
  \begin{scope}[xshift=2.5cm]
      \node [vertex, label=below:$0$] (A) at (1,0) {};
      \node [vertex, label=below:$0$] (B) at (0,1) {};
      \node [vertex, label=below:$0$] (C) at (0,-1) {};
      \draw[dashed] (B) -- (-0.7,1.7);
      \draw (B) -- (0.7,1.7);
      \draw (A) -- (2,0);
      \draw (A) -- (B);
      \draw (A) -- (C);
      \draw[dashed] (C) -- (-0.7,-1.7);
      \draw (C) -- (0.7,-1.7);
  \end{scope}
  \draw[->, thick] (5.5,0) -- (6,0);
  \begin{scope}[xshift=8cm]
      \node [vertex, label=below:$0$] (A) at (1,0) {};
      \node [vertex, label=below:$0$] (B) at (0,1) {};
      \node [vertex, label=below:$0$] (C) at (0,-1) {};
      \node [vertex, label=below:$g-1$] (D) at (-1,0) {};
      \draw (B) -- (0.7,1.7);
      \draw (A) -- (2,0);
      \draw (A) -- (B);
      \draw (A) -- (C);
      \draw (C) -- (0.7,-1.7);
      \draw (D) -- (-1.7,0.7);
      \draw (D) -- (-1.7,-0.7);
      \draw (D) -- (C);
      \draw (D) -- (B);
  \end{scope}
  \end{tikzpicture}
\caption{The black dots stand for irreducible components and a line stands for a node if it connects two black dots or for a weight $\epsilon$ marked point if it emanates from a black dot and has no other endpoint. The dashed lines stand for the special points arising from the resolution of the nodal singularities on the genus $g-1$ component. The figure shows how the curve $\tilde{C}$ is decomposed into a smooth genus $g-1$ curve and a chain of $\mathbb{P}^1$.}
  \end{figure}
  
From this observation, we define a filtration on the fiber $F_{m,n}$ over $S_{G_{\epsilon^n,\mathrm{sing}}}$ determined by how many forgotten marked points lie on the genus $g-1$ irreducible component which survives from the stabilization. We denote by $X_l$ the locus of curves with at most $l$ forgotten marked points on this irreducible component. We have
    \begin{equation*}
        \varnothing=X_{-1}\subset X_0\subset\cdots\subset X_{m-n}=F_{m,n}.
    \end{equation*}

    Notice that $X_l$ is closed and $X_l\backslash X_{l-1}$ is the locus of curves with exactly $l$ forgotten marked points on the genus $g-1$ component. $X_l\backslash X_{l-1}$ has a nice description by the moduli space of stable weighted pointed curves of genus $0$. We first choose $l$ points (not necessarily distinct) in the smooth locus $C\backslash \{p\}$ and a chain of $\mathbb{P}^1$ with $m-n-l$ weight $\epsilon$ marked points and two weight~$1$ marked points which are the points at which the chain is glued to the genus $g-1$ component. This chain of $\mathbb{P}^1$ is exactly the curve in $\overline{{\mathcal{M}}}_{0,1^2\epsilon^{m-n-l}}$ mentioned in Section \ref{sec:MS}. In particular, $F_{m,n}$ depends only on $m-n$ and we replace $F_{m,n}$ by $F_{m-n}$ from now on.

    Therefore, $X_l\backslash X_{l-1}=\amalg_{I\subset \{n+1,...,m\},|I|=l}(C\backslash\{p\})^{l}\times\overline{{\mathcal{M}}}_{0,1^2\epsilon^{m-n-l}}$ except for $l=m-n$. In the latter case, 
    \begin{equation*}
        X_l\backslash X_{l-1}=(C\backslash\{p\})^{m-n},
    \end{equation*}
    and there is a spectral sequence by \cite[Lemma 3.8]{Arapura2005}
    \begin{equation*}
        E_1^{p,q}=H^{p+q}_c(X_p\backslash X_{p-1},\mathbb{Q}) \Rightarrow H^{p+q}(F_{m-n},\mathbb{Q}),
    \end{equation*} 
    whose differentials are compatible with the mixed Hodge structure.
\begin{rem}
    The homomorphism in the spectral sequence can be calculated by verifying how the strata are glued together. The stratum $\overline{{\mathcal{M}}}_{0,1^2\epsilon^{m-n-l}}$ is the Losev--Manin space and its cohomology is well studied in \cite{LM} and \cite{BM}. The smooth locus $C\backslash\{p\}$ is topologically isomorphic to a smooth genus $g-1$ curve $C_g$ punctured at two points. This provides a theoretical method to fully determine the cohomology group. In particular, in genus $1$, the intersection complex 
    \begin{equation*}
        \mathrm{IC}_{\overline{\mathcal{M}}_{1,1}}(\mathcal{L}_1^{\otimes k})\simeq (R^0j_*(\mathcal{L}_1^{\otimes k}))[1]
    \end{equation*} 
    where $j:\mathcal{M}_{1,1}\to\overline{\mathcal{M}}_{1,1}$ is the open immersion. Since the stratum $S_{G_{1,\mathrm{sing}}}\subset\overline{\mathcal{M}}_{1,1}$ is a single point, we can compute the Betti numbers of $\overline{{\mathcal{M}}}_{1,\epsilon^m}$ with this spectral sequence and recover the Betti numbers of $\overline{\mathcal{M}}_{1,m}$ by the wall-crossing formula \eqref{WC}. Getzler calculated the Betti numbers of $\overline{\mathcal{M}}_{1,m}$ for small $m$ in \cite[Section 3]{GE}. But he did not provide a closed formula for arbitrary $m$. 
\end{rem}    
    In the following, we use $1_a,1_b$ to distinguish the two weight $1$ marked points. Notice that the product does not affect the gluing. We can glue $(C\backslash\{p\})\times\overline{{\mathcal{M}}}_{0,1^2\epsilon^{m-n-l}}$ and $\overline{{\mathcal{M}}}_{0,1_a1_b\epsilon^{m-n-l+1}}$ and then take the fiber product. The closure of $(C\backslash\{p\})\times\overline{{\mathcal{M}}}_{0,1_a1_b\epsilon^{m-n-l}}$ is $C_g\times\overline{{\mathcal{M}}}_{0,1_a1_b\epsilon^{m-n-l}}$ and it is glued with $\overline{{\mathcal{M}}}_{0,1_a1_b\epsilon^{m-n-l+1}}$ by identifying two $\overline{{\mathcal{M}}}_{0,1_a1_b\epsilon^{m-n-l}}$-sections with 
    \begin{equation*}
        \overline{{\mathcal{M}}}_{0,(1,1_a,\tilde{\epsilon})}\times\overline{{\mathcal{M}}}_{0,(1,1_b,\epsilon^{m-n-l})}\quad\textup{and}\quad \overline{{\mathcal{M}}}_{0,(1,1_b,\tilde{\epsilon})}\times\overline{{\mathcal{M}}}_{0,(1,1_a,\epsilon^{m-n-l})}
    \end{equation*} 
    in $\overline{{\mathcal{M}}}_{0,1^2\epsilon^{m-n-l+1}}$ respectively. Here $\tilde{\epsilon}$ is the newly added weight $\epsilon$ point in $\overline{{\mathcal{M}}}_{0,1^2\epsilon^{m-n-l+1}}$ compared with $\overline{{\mathcal{M}}}_{0,1^2\epsilon^{m-n-l}}$.  

In practice, the computation relies highly on the ring structure of the cohomology ring of the Losev--Manin spaces. However, the spectral sequence rapidly degenerates near the boundary. We can provide a desired bound for the $i$-th cohomology groups if $i$ is large. 

For simplicity, denote $m-n$ by $k$. We start from the top cohomology $H^{2k}(F_k,\mathbb{Q})$. The only relevant differential is 
\begin{equation*}
    d_r:E_{r}^{k-r,k+r-1}\to E_{r}^{k,k}.
\end{equation*}
Notice that $H^{2k-1}_c(X_{k-r}\backslash X_{k-r-1},\mathbb{Q})$ is zero when $r\geq 1$. Thus $d_r$ is the zero homomorphism. Therefore 
\begin{equation*}
    \dim H^{2k}(F_k,\mathbb{Q})=\dim H^{2k}_c((C\backslash\{p\})^{k},\mathbb{Q})=1.
\end{equation*}

For $H^{2k-2}(F_k,\mathbb{Q})$, the relevant differentials are of two types 
\begin{equation}\label{diff}
    \begin{aligned}
        d_r:E_r^{l,2k-2-l}\to E_r^{l+r,2k-1-l-r},\\
        d_r:E_r^{l,2k-3-l}\to E_r^{l+r,2k-2-l-r}.
    \end{aligned}
\end{equation}

\begin{prop}
The two types of differentials in \eqref{diff} are both zero homomorphisms if $l+r\neq k$.
\end{prop}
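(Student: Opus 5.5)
We will argue by weights: the spectral sequence of \cite[Lemma 3.8]{Arapura2005} has differentials that are morphisms of mixed Hodge structures. It therefore suffices to show that, whenever $l+r\neq k$, the source and target of each differential in \eqref{diff} carry disjoint sets of weights, or that one of them vanishes. The plan has three steps.

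\textbf{Step 1: reduce to Künneth.} For $p<k$ we have
\begin{equation*}
E_1^{p,q}=\bigoplus_{|I|=p}H^{p+q}_c\bigl((C\backslash\{p\})^{p}\times\overline{\mathcal{M}}_{0,1^2\epsilon^{k-p}}\bigr).
\end{equation*}
By Künneth this equals $\bigoplus_{a+b=p+q}H^a_c((C\backslash\{p\})^p)\otimes H^b(\overline{\mathcal{M}}_{0,1^2\epsilon^{k-p}})$. The Losev--Manin space is smooth projective toric, so $H^b$ vanishes for $b$ odd and is pure of weight $b$ (Tate type) for $b$ even. The surface $C\backslash\{p\}$ is a genus $g-1$ curve minus two points. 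Its compactly supported cohomology is:
\begin{itemize}[label={}]
\item $H^0_c=0$;
\item $H^1_c$ is an extension of the weight-$1$ part $H^1(C_{g-1})$ by a one-dimensional weight-$0$ piece;
\item $H^2_c=\mathbb{Q}(-1)$, of weight $2$.
\end{itemize}
Hence $H^a_c((C\backslash\{p\})^p)$ vanishes for $a<p$. For $a=2p-s$ it has weights in $[2p-2s,\,2p-s]$, and only its top degree $a=2p$ is pure of weight $2p$.

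\textbf{Step 2: locate the pieces of total degree $2k-3$, $2k-2$, $2k-1$.} In column $l<k$, total degree $2k-2$ requires $a+b=2k-2$ with $a\le 2l$ and $b\le 2(k-l-1)$, since $\dim\overline{\mathcal{M}}_{0,1^2\epsilon^{k-l}}=k-l-1$. This forces $a=2l$ and $b=2k-2l-2$, so the piece is pure of weight $2k-2$. Total degree $2k-1$ in column $l+r<k$ would need $a+b=2k-1$ with $a\le 2(l+r)$ and $b\le 2(k-l-r-1)$, which is impossible. So the targets of the first type in \eqref{diff} vanish unless $l+r=k$.

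For the second type, the source has total degree $2k-3$. Here $(a,b)=(2l-1,\,2k-2l-2)$ is the only option, because $b$ must be even. Its weights are at most $2k-3$. The target, in column $l+r<k$ and degree $2k-2$, is pure of weight $2k-2$ by the same count. A morphism of mixed Hodge structures between spaces with no common weights is zero, so $d_r=0$ in this case as well. We must also account for the $E_r$-pages being subquotients of $E_1$; this only restricts weights further, so the argument passes to all pages.

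\textbf{Main obstacle.} The delicate point is the weight bookkeeping for $H^{2l-1}_c((C\backslash\{p\})^l)$. We need all of its weights to be strictly below $2l$ after tensoring. This follows since in Künneth exactly one factor contributes $H^1_c$, of weight $\le 1$, while the others contribute weight-$2$ classes. We should also check that the identification of the $E_1$ terms, including the union over index sets $I$, is compatible with the Hodge structures in the spectral sequence. That compatibility is exactly what \cite[Lemma 3.8]{Arapura2005} provides.
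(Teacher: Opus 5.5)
Your proposal is correct and follows essentially the same route as the paper. The first type vanishes because its target is zero by the degree count for $l+r<k$. The second type vanishes because, via K\"unneth, the target is pure of weight $2k-2$ while the source has weights at most $2k-3$, and the differentials are morphisms of mixed Hodge structures. You simply make the bookkeeping explicit where the paper leaves it implicit: the forced pair $(a,b)$, and the observation that the $E_r$-pages are subquotients of $E_1$.
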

\begin{proof}
    For the first type of differentials, if $E_r^{l+r,2k-1-l-r}$ is nontrivial, one must have $l+r=k$. Hence they are zero if $l+r\neq k$. For the second type of differentials, the weight argument in the mixed Hodge theory forces most of them to be zero homomorphisms. Indeed, if $l+r<k$, we find that \begin{multline}\label{decom}
         H^{2k-2}_c(X_{l+r}\backslash X_{l+r-1},\mathbb{Q})\\
         =\bigoplus_{I\subset \{1,\ldots,k\},|I|=l+r}H^{2l+2r}_c((C\backslash\{p\})^{l+r},\mathbb{Q})\otimes H^{2k-2l-2r-2}(\overline{{\mathcal{M}}}_{0,1^2\epsilon^{k-l-r}},\mathbb{Q})
    \end{multline}
    by the K\"{u}nneth formula. Therefore, the weight of $E_r^{l+r,2k-2-l-r}$ is $2k-2$. But the highest weight part of $H^{2k-3}_c(X_{l}\backslash X_{l-1},\mathbb{Q})$ is $2k-3$ by the K\"{u}nneth formula and hence the highest weight part of $E_r^{l,2k-3-l}$ is $2k-3$. Since the differentials are compatible with the mixed Hodge structure, the differentials are zero homomorphisms.
    \end{proof}

Notice that the only nontrivial differentials have the target $E_r^{k,k-1}$ or $E_r^{k,k-2}$. Their influence on $H^{2k-2}(F_k,\mathbb{Q})$ is at most $$\dim E_1^{k,k-1}+\dim E_1^{k,k-2}=\binom{k}{1}(2g-1)+\binom{k}{2}(2g-1)^2.$$ Therefore, 
\begin{multline}\label{9}
    \dim H^{2k-2}(F_k,\mathbb{Q})\geq\sum_l\dim H^{2k-2}_c(X_{l}\backslash X_{l-1},\mathbb{Q})-(\binom{k}{1}(2g-1)+\binom{k}{2}(2g-1)^2)\\
    =\binom{k}{k}+...+\binom{k}{2}+\binom{k}{1}+(2g-1)^2\binom{k}{2}-(\binom{k}{1}(2g-1)+\binom{k}{2}(2g-1)^2)=2^k-1-k(2g-1).
\end{multline}
The dimension $\dim H^{2k-2}(F_k,\mathbb{Q})$ also has an upper bound
\begin{equation} \label{ineq2}
    \sum_l\dim H^{2k-2}_c(X_{l}\backslash X_{l-1},\mathbb{Q})=2^k-1+(2g-1)^2\binom{k}{2}\geq \dim H^{2k-2}(F_k,\mathbb{Q})
\end{equation}

With the cohomology groups computed above, we prove the following theorem about the existence of supports in $S_{\mathcal{A},G_{\mathrm{sing}}}$.
\begin{prop}\label{support}
    Let $g\geq1$ be the genus and $\mathcal{A}$ be the weight data of $n$ weights in the interior of a fine chamber. Then $\overline{S}_{\mathcal{A},G_{\mathrm{sing}}}$ occurs as the support in the decomposition theorem for $f_{g,\mathcal{A},k}:\overline{\mathcal{M}}_{g,\mathcal{A}\cup\epsilon^k}\to\overline{\mathcal{M}}_{g,\mathcal{A}}$ when $k$ is sufficiently large. In fact, the shifted intersection complex $\mathrm{IC}_{\overline{S}_{\mathcal{A},G_{\mathrm{sing}}}}[c]$ appears as a direct summand for some integer $c$ when $k$ is sufficiently large.
\end{prop}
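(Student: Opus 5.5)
First treat the case $\mathcal{A}=\epsilon^n$, then transfer to arbitrary $\mathcal{A}$ in the interior of a fine chamber by the wall-crossing formula (Proposition \ref{thm:WC}). For $\mathcal{A}=\epsilon^n$, any direct summand of $R{f_{g,\epsilon^n,k}}_*\mathbb{Q}[3g-3+n+k]$ contributing to the stalk cohomology over a general point of $S_{G_{\epsilon^n,\mathrm{sing}}}$ is supported either on $\overline{\mathcal{M}}_{g,\epsilon^n}$ or on the divisor $\overline{S}_{G_{\epsilon^n,\mathrm{sing}}}$. Indeed, every other stratum closure lies in the compact type locus or meets the divisor in codimension $\geq 1$ inside it, so it misses the generic point. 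The plan is a counting argument: bound from above what the fully supported summands can contribute to $H^{2k-2}(F_k,\mathbb{Q})$, and compare with the lower bound \eqref{9}, namely $\dim H^{2k-2}(F_k)\ge 2^k-1-k(2g-1)$, which grows exponentially in $k$.

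For the upper bound, use the recursive structure described after Theorem \ref{CT}. By induction on $k$ via \eqref{ot2} and the projection formula, the fully supported summands for $f_{g,\epsilon^n,k}$ have the form $\mathrm{IC}_{\overline{\mathcal{M}}_{g,\epsilon^n}}(V)[c]$, where $V$ runs over summands of tensor powers $\mathcal{L}_g^{\otimes j}$ with $j\le k$. Their total multiplicity is controlled by the generic fiber cohomology $H^*(C^k)$, which is the $k$-fold tensor power of $H^*(C)=\mathbb{Q}\oplus\mathcal{L}_g[-1]\oplus\mathbb{Q}[-2]$. At a point of the divisor, the stalk of $\mathrm{IC}(\mathcal{L}_g^{\otimes j})$ has a local structure given by the Picard–Lefschetz monodromy around the nodal curve. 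Only the invariants of a unipotent transvection (rank $2g-1$ out of $2g$) and the degenerate parts survive. Moreover, by the strict support condition \eqref{str}, IC stalks at a divisor point live only in the lowest perverse degree $-\dim$. Hence the total contribution of fully supported summands to $H^{2k-2}$ of the special fiber is at most the dimension of the degree-$(2k-2)$ part of the invariant subspace of $H^*(C_{\mathrm{gen}})^{\otimes k}$ under the local monodromy. Since $H^{2k-2}$ of $C^k$ has dimension $k\cdot 1 + \binom{k}{2}(2g)^2$ worth of pieces ($\binom k1$ from choosing one $H^0$, $\binom k2(2g)^2$ from two $H^1$'s), the invariants have dimension $\le k+\binom{k}{2}(2g-1)^2$, which is polynomial (quadratic) in $k$. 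Equivalently, I would argue that the restriction of the fully supported part to the special point injects into the monodromy invariants of $H^{2k-2}$ of the nearby fiber, by the local invariant cycle theorem.

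Comparing the two bounds, $2^k-1-k(2g-1)>k+\binom{k}{2}(2g-1)^2$ for $k$ large. So some class in $H^{2k-2}(F_k)$ is not accounted for by fully supported summands, and a summand $\mathrm{IC}_{\overline{S}_{G_{\mathrm{sing}}}}(\mathcal{M})[c]$ must occur. To see that $\mathcal{M}$ may be taken trivial, I would use the filtration $X_l$. The excess classes come from $H^{2k-2}_c$ of the strata $(C\setminus p)^l\times\overline{\mathcal{M}}_{0,1^2\epsilon^{k-l}}$ in top degree in the first factor, i.e. fundamental classes of components times Losev–Manin classes. These are algebraic cycles, and they are invariant under monodromy along $S_{G_{\mathrm{sing}}}$. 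The only potential monodromy is the swap of the two branches at the node, which exchanges $1_a,1_b$, and this fixes the sum of the classes and its symmetrizations. So the trivial-coefficient part is already exponentially large, and $\mathrm{IC}_{\overline{S}}[c]$ occurs for $c=$ the appropriate shift (degree $2k-2$). Finally, for general $\mathcal{A}$, compose simple wall-crossings from $\mathcal{A}\cup\epsilon^k$ and $\mathcal{A}$ down to $\epsilon^{n+k}$ and $\epsilon^n$. By Proposition \ref{thm:WC}, the reduction maps are isomorphisms over a general point of the singular divisor, since the blow-up centers are compact type loci of curves with coinciding points, which are not general in $\overline S_{\mathcal{A},G_{\mathrm{sing}}}$. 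Hence the generic fiber computation over the divisor is identical, and the same count applies.

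The main obstacle is the upper bound on the fully supported contribution. It requires controlling the stalks of $\mathrm{IC}(\mathcal{L}_g^{\otimes j})$ at the divisor, together with the multiplicities with which they occur. I would first try the local invariant cycle theorem / decomposition-theorem argument that the restriction map from the full fiber cohomology to the fiber over the divisor has fully-supported image equal to the invariants. This reduces the bound to linear algebra on $(\mathbb{Q}\oplus V[-1]\oplus\mathbb{Q}[-2])^{\otimes k}$ with a transvection.
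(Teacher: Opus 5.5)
Your treatment of the case $\mathcal{A}=\epsilon^n$ is essentially the paper's argument. The lower bound \eqref{9} on $\dim H^{2k-2}(F_k,\mathbb{Q})$ is played against a quadratic-in-$k$ bound on what the fully supported summands contribute at a general point of the divisor; this is the paper's $k+d\binom{k}{2}$, and you identify $d$ via Picard--Lefschetz. The trivial coefficient then comes from the monodromy-invariant top-degree K\"unneth classes of the strata $X_l\setminus X_{l-1}$, $l<k$. One harmless slip: for a transvection $T$ on $H^1(C)$, the $T\otimes T$-invariants in $H^1\otimes H^1$ have dimension $(2g-1)^2+1$, because the symplectic class is invariant but does not lie in $(H^1)^T\otimes(H^1)^T$. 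Your bound should therefore read $k+\binom{k}{2}\bigl((2g-1)^2+1\bigr)$, which matches the paper's $d\le 1+(2g-1)^2$ and is still quadratic.

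The genuine gap is the transfer to general $\mathcal{A}$. It is true that the base reduction map $r\colon\overline{\mathcal{M}}_{g,\mathcal{A}}\to\overline{\mathcal{M}}_{g,\epsilon^n}$ is an isomorphism near a general point of the divisor. Its centers are loci of coinciding points (not compact type loci), and they do not contain a general nodal curve. But this does not make the fibers of $f_{g,\mathcal{A},k}$ over general points of $\overline{S}_{\mathcal{A},G_{\mathrm{sing}}}$ equal to $F_k$. The fibers are governed by the top reduction map $\rho\colon\overline{\mathcal{M}}_{g,\mathcal{A}\cup\epsilon^k}\to\overline{\mathcal{M}}_{g,\epsilon^{n+k}}$. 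As soon as some $a_i=1$ (for instance $\mathcal{A}=1^n$, or the data $1^2\cup\mathcal{A}$ used in the proof of Theorem \ref{thm:mt}), $\rho$ crosses the walls $\{i\}\cup J$, with $J$ a set of at least two $\epsilon$-points. The corresponding centers, where the $J$-points collide with the $i$-th point, meet every fiber. So the fiber over a general divisor point is an iterated blow-up of $F_k$, and the generic fiber is a blow-up of $C^k$.

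Rerunning your count on these actual fibers also fails. Each such $J$ contributes a class to $H^{2k-2}$ of the generic fiber, giving on the order of $2^k$ constant fully supported summands, each of rank one at the divisor. Your polynomial upper bound is therefore lost.

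What is needed is the paper's argument. By \eqref{WC}, $\mathbb{Q}$ is a direct summand of $R\rho_*\mathbb{Q}$, so $R{f_{g,\epsilon^n,k}}_*\mathbb{Q}$ is a direct summand of $Rr_*R{f_{g,\mathcal{A},k}}_*\mathbb{Q}$. Hence the summand $\mathrm{IC}_{\overline{S}_{G_{\epsilon^n,\mathrm{sing}}}}[c]$ must come from a summand of $R{f_{g,\mathcal{A},k}}_*\mathbb{Q}$ whose support maps onto the divisor, i.e.\ either the whole space or $\overline{S}_{\mathcal{A},G_{\mathrm{sing}}}$. The whole space is excluded, because the summands of $Rr_*\mathrm{IC}_{\overline{\mathcal{M}}_{g,\mathcal{A}}}(\mathcal{V})$ with smaller support lie in the codimension $\geq 2$ centers. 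Triviality of the coefficient transfers because $r$ is an isomorphism over a dense open subset of the divisor.
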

\begin{proof}
    Again, we first consider the special case $\mathcal{A}=(\epsilon^n)$. Then, the singular fiber of the forgetful map over the stratum $S_{G_{\epsilon^n,\mathrm{sing}}}$ is $F_k$. By the stricter support condition \eqref{str}, $H^{2k-2}(F_k,\mathbb{Q})$ comes from the intermediate extension of 
    \begin{equation*}
        \mathbb{Q}^{\oplus k}[3g-1+n-k]\bigoplus (\mathcal{L}_{g,\epsilon^n}^{\otimes2})^{\oplus\binom{k}{2}}[3g-1+n-k]
    \end{equation*} 
    on the moduli space of compact type curves or the direct summands supported on $\overline{S}_{G_{\epsilon^n,\mathrm{sing}}}$. Although the intermediate extension is only defined for a local system shifted by $3g-3+n$ on $\mathcal{M}_{g,\epsilon^n}^{\mathrm{ct}}$, we call $\mathrm{IC}_{\overline{\mathcal{M}}_{g,\epsilon^n}}(\mathcal{L})[c-(3g-3+n)]$ the intermediate extension of the local system $\mathcal{L}$ shifted by any integer $c$. The intermediate extension of $\mathcal{L}_{g,\epsilon^n}^{\otimes2}[3g-3+n]$ restricted to $S_{G_{\epsilon^n,\mathrm{sing}}}$ is concentrated in degree $-3g+3-n$ by the stricter support condition \eqref{str}. Suppose that the restriction is a rank $d$ local system on $S_{G_{\epsilon^n,\mathrm{sing}}}$ shifted by $3g-3+n$. Then, if $\overline{S}_{G_{\epsilon^n,\mathrm{{sing}}}}$ does not occur as the support in the decomposition theorem, the cohomology groups of the fiber require that 
    \begin{equation*}
        2^k-1-k(2g-1)\leq k+d\binom{k}{2},
    \end{equation*} 
    by \eqref{9}.
    Notice that $d$ is independent of $k$. When $k$ is large enough, this is impossible for a fixed $d$. Therefore, there exists a direct summand in the decomposition theorem whose support is $\overline{S}_{G_{\epsilon^n,\mathrm{sing}}}$. To prove the second assertion, we analyze the monodromy on the $E_1$ page over $S_{G_{\epsilon^n,\mathrm{sing}}}$. Notice that the filtration can be defined relatively for the fibration over $S_{G_{\epsilon^n,\mathrm{sing}}}$. For each $E_1^{p,q}$ where $p+q=2k-2$, we study the monodromy by the K\"{u}nneth formula \eqref{decom}. In fact, almost all monodromy contributions are trivial. The monodromies on $H^{2k-2l-2}(\overline{{\mathcal{M}}}_{0,1^2\epsilon^{k-l}},\mathbb{Q})$ and $H^2_c(C\backslash\{p\},\mathbb{Q})$ are both trivial. The only nontrivial monodromy comes from $H^{2k-2}({(C\backslash\{p\})}^k,\mathbb{Q})$ over $S_{G_{\epsilon^n,\mathrm{sing}}}$. Even if we wipe out the full influence of the differentials, however, the number of copies of trivial local systems in the variation of Hodge structures of $H^{2k-2}(F_k,\mathbb{Q})$ is at least 
    \begin{equation*}
        \binom{k}{k}+...+\binom{k}{2}+\binom{k}{1}-(\binom{k}{1}(2g-1)+\binom{k}{2}(2g-1)^2)=2^k-1-k(2g-1)-\binom{k}{2}(2g-1)^2.
    \end{equation*} 
    Therefore, when $k$ is sufficiently large such that 
    \begin{equation}\label{ineq}
        2^k-1-k(2g-1)-\binom{k}{2}(2g-1)^2>k+d\binom{k}{2},
    \end{equation} 
    there must be a shifted intersection complex $\mathrm{IC}_{\overline{S}_{\mathcal{A},G_{\mathrm{sing}}}}[c]$ occurring in the decomposition theorem for $f_{g,\mathcal{A},k}$ due to the semisimplicity of the coefficient on $S_{\mathcal{A},G_{\mathrm{sing}}}$ which is guaranteed by the decomposition theorem.

    For the general case, we consider the following commutative diagram
    $$\begin{tikzcd}
   \overline{{\mathcal{M}}}_{g,\mathcal{A}\cup\epsilon^k}  \arrow[r] \arrow[d,"f_{g,\mathcal{A},k}"] &\overline{{\mathcal{M}}}_{g,\epsilon^{n+k}} \arrow[d,"f_{g,\epsilon^n,k}"] \\ \overline{{\mathcal{M}}}_{g,\mathcal{A}} \arrow[r] &\overline{{\mathcal{M}}}_{g,\epsilon^n}.
\end{tikzcd}$$
The horizontal maps are reduction maps induced by changes in the weights. The supports occurring in the decomposition theorem for $f_{g,\mathcal{A},k}$ are the closures of the dual-graph strata. The direct summand in the decomposition theorem for $f_{g,\mathcal{A},k}$ which contributes to the direct summand supported on $\overline{S}_{G_{\epsilon^n,\mathrm{sing}}}$ must be mapped surjectively to the divisor $\overline{S}_{G_{\epsilon^n,\mathrm{sing}}}$ by the reduction map. Therefore, its support must be $\overline{S}_{G_{\mathcal{A},\mathrm{sing}}}$ or $\overline{\mathcal{M}}_{g,\mathcal{A}}$. Notice that for the newly appearing support except the full support in the wall-crossing formula \eqref{WC} for the reduction map that crosses exactly one wall, the codimension is at least $2$. Therefore, a codimension $1$ support cannot be obtained from $\overline{\mathcal{M}}_{g,\mathcal{A}}$ through the reduction map. The direct summand in the decomposition theorem for $f_{g,\mathcal{A},k}$ that contributes to the direct summand supported on $\overline{S}_{G_{\epsilon^n,\mathrm{sing}}}\subset \overline{\mathcal{M}}_{g,\epsilon^n}$ after the wall-crossing is supported on $\overline{S}_{G_{\mathcal{A},\mathrm{sing}}}\subset \overline{\mathcal{M}}_{g,\mathcal{A}}$. Moreover, if the coefficient of the intersection complex supported on $\overline{S}_{G_{\epsilon^n,\mathrm{sing}}}$ is trivial, the corresponding direct summand also has the trivial coefficient. Hence, the proposition is also true in the general case.
\end{proof}
\subsection{
  Main theorem for
  \texorpdfstring{
    $f_{g,\mathcal{A},m-n}:
    \overline{\mathcal{M}}_{g,\mathcal{A}\cup\epsilon^{m-n}}
    \to
    \overline{\mathcal{M}}_{g,\mathcal{A}}$
    }{weighted forgetful maps}
} 
    \begin{theorem}[Main theorem for $f_{g,\mathcal{A},m-n}:\overline{\mathcal{M}}_{g,\mathcal{A}\cup\epsilon^{m-n}}\to\overline{\mathcal{M}}_{g,\mathcal{A}}$]\label{thm:mt}
    Let $g\geq0$ be the genus and $\mathcal{A}$ be the weight data of $n$ weights in the interior of a fine chamber. For every fixed integer $n$, when $m$ is sufficiently large, every stratum closure appears as the support of a direct summand in the decomposition theorem for ${Rf_{g,\mathcal{A},m-n}}_*\mathbb{Q}_{\overline{{\mathcal{M}}}_{g,\mathcal{A}\cup\epsilon^{m-n}}}[3g-3+m]$.
    \end{theorem}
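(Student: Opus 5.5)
We prove Theorem \ref{thm:mt} by induction on the genus $g$, using the dichotomy of strata: a stratum $S_G$ either lies in the compact type locus (when $G$ is a tree) or lies in $\overline{S}_{\mathcal{A},G_{\mathrm{sing}}}$ (when $G$ has a cycle). For $g=0$ every stratum is of compact type, and Theorem \ref{CT} gives the claim once we note that the supports of the decomposition on $\overline{\mathcal{M}}_{g,\mathcal{A}}$ restrict to supports on the open set $\mathcal{M}^{\mathrm{ct}}_{g,\mathcal{A}}$, and conversely. Indeed, restriction to an open set commutes with $Rf_*$ and sends $\mathrm{IC}_{\overline{S}_G}(\mathcal{L})$ to $\mathrm{IC}_{\overline{S}_G\cap \mathcal{M}^{\mathrm{ct}}}(\mathcal{L})$. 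The same remark, applied for general $g$, shows that every tree stratum closure occurs for $m-n\ge 6g-5+2n$.

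For the non-tree strata, the plan is to use the gluing map $\xi:\overline{\mathcal{M}}_{g-1,\mathcal{A}\cup 1^2}\to\overline{S}_{\mathcal{A},G_{\mathrm{sing}}}$. This map is finite and generically $2\!:\!1$ through the swap of the two glued points. Every stratum with a cycle is the image under $\xi$ of a stratum $S_{G'}$ of $\overline{\mathcal{M}}_{g-1,\mathcal{A}\cup1^2}$, where $G'$ is obtained by cutting one non-separating edge. The key compatibility is the following: the restriction of $f_{g,\mathcal{A},k}$ over the normalization is, up to the $\mathbb{Z}/2$ action, the forgetful map $f_{g-1,\mathcal{A}\cup1^2,k}$ together with the extra chains of $\mathbb{P}^1$ that appear at the node, as in the filtration $X_l$ used above. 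Rather than comparing whole pushforwards, we proceed as in the proof of Theorem \ref{CT}.

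First, by Proposition \ref{support}, for $k_0$ large the complex $\mathrm{IC}_{\overline{S}_{\mathcal{A},G_{\mathrm{sing}}}}[c]$ with trivial coefficient is a summand of $Rf_{g,\mathcal{A},k_0*}$. Break the $\mathbb{Z}/2$ symmetry by adding weight $\epsilon$ points, as in Theorem \ref{CT}. Then $\mathrm{IC}_{\overline{S}_{\mathcal{A}\cup\epsilon^{j},G_{\mathrm{sing}}}}$ upstairs is $R\xi_*\mathbb{Q}[\dim]$ on an asymmetric normalization $\overline{\mathcal{M}}_{g-1,\mathcal{A}\cup\epsilon^j\cup 1^2}$. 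Here $1^2$ is asymmetric in the sense that the two branches can be separated by where the extra points sit, or at least the generic normalization map is birational and finite, hence small. Pushing it forward along the further forgetful map $f_{g,\mathcal{A}\cup\epsilon^{j},k_1}$ factors as $R\xi_*\circ Rf_{g-1,\mathcal{A}\cup1^2,k_1*}$. By the inductive hypothesis in genus $g-1$, applied to the weight data $\mathcal{A}\cup1^2$, which we may assume lies in the interior of a fine chamber, every stratum closure of $\overline{\mathcal{M}}_{g-1,\mathcal{A}\cup1^2}$ occurs for $k_1$ large. Since $\xi$ is finite, the direct image of a simple summand $\mathrm{IC}_{\overline{S}_{G'}}(\mathcal{L})$ is a nonzero semisimple perverse sheaf supported exactly on $\xi(\overline{S}_{G'})$. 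By the decomposition theorem for the composite, it contributes a summand with that support. Finally, the composition of forgetful maps is the forgetful map $f_{g,\mathcal{A},m-n}$ with $m-n=k_0+j+k_1$. Since $\mathrm{IC}[c]$ is a direct summand at the intermediate stage, each summand of its pushforward is a direct summand of the total pushforward, again by the decomposition theorem. Every stratum with a cycle arises as some $\xi(\overline{S}_{G'})$, so together with the tree case all strata occur.

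The main obstacle is the compatibility step, namely identifying the pushforward of $\mathrm{IC}_{\overline{S}_{G_{\mathrm{sing}}}}$ along further forgetful maps with the genus $g-1$ forgetful pushforward. The difficulty is that the preimage of the nodal divisor upstairs contains the extra $\mathbb{P}^1$-chain components, corresponding to points falling into the node. This means $f^{-1}(\overline{S}_{G_{\mathrm{sing}}})$ is not simply $\overline{S}_{G_{\mathrm{sing}}}$ upstairs. I would handle it by working only with the summand $\mathrm{IC}_{\overline{S}_{G_{\mathrm{sing}}}}$ upstairs. That summand is the intersection complex of the divisor inside $\overline{\mathcal{M}}_{g,\mathcal{A}\cup\epsilon^{k_0}}$, whose normalization is the genus $g-1$ space with two weight-one points, and there the forgetful map is exactly $f_{g-1,\mathcal{A}\cup1^2,k_1}$. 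Some care is also needed to ensure the $\epsilon$ used is small enough to keep every intermediate weight datum in a fine chamber interior.
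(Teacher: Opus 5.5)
\textbf{Gap: the $\mathbb{Z}/2$ symmetry of the self-node.} Your overall architecture matches the paper's: trees via Theorem \ref{CT}, and cycles via the gluing map from genus $g-1$ plus induction. The step that is supposed to remove the $\mathbb{Z}/2$ issue fails, however. Adding weight $\epsilon$ points cannot make
\[
\xi:\overline{\mathcal{M}}_{g-1,\mathcal{A}\cup\epsilon^j\cup1^2}\to\overline{S}_{\mathcal{A}\cup\epsilon^j,G_{\mathrm{sing}}}
\]
birational. All the added points sit on the single genus $g-1$ vertex, and the automorphism of $G_{\mathrm{sing}}$ reversing the loop fixes every leg. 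The two branches at the node are interchanged by monodromy, since $\mathcal{M}_{g-1,\mathcal{A}\cup\epsilon^j\cup1^2}$ is connected. So $\xi$ stays generically $2:1$, and, as the paper notes,
\[
R\xi_*\mathbb{Q}[\dim]\simeq\mathrm{IC}_{\overline{S}_{\mathcal{A}\cup\epsilon^j,G_{\mathrm{sing}}}}\oplus\mathrm{IC}_{\overline{S}_{\mathcal{A}\cup\epsilon^j,G_{\mathrm{sing}}}}(\mathbb{Q}_{\mathrm{sign}}).
\]
The normalization of the divisor is finite and birational, but it is the quotient $\overline{\mathcal{M}}_{g-1,\mathcal{A}\cup\epsilon^j\cup1^2}/S_2$. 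The genus $g-1$ inductive hypothesis, which concerns $Rf_{g-1,\mathcal{A}\cup1^2,j*}\mathbb{Q}$ on the ordered space, does not apply there directly.

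Proposition \ref{support} only tells you that the trivial-coefficient summand lies in the total pushforward. A support $\xi(\overline{S}_{G'})$ produced by induction inside $R\xi_*Rf_{g-1,\mathcal{A}\cup1^2,j*}\mathbb{Q}$ might a priori occur only in the pushforward of $\mathrm{IC}(\mathbb{Q}_{\mathrm{sign}})$. That complex is not known to be a summand of $Rf_{g,\mathcal{A},m-n*}\mathbb{Q}$. So your final step, that each summand of the pushforward is a summand of the total pushforward, is unjustified.

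\textbf{The missing idea: the swap involution.} Let $\tau$ swap the two weight-one points. It commutes with the forgetful maps and satisfies $\xi\circ\tau=\xi$, and the $\mathbb{Q}$-part is exactly the $\tau$-invariant part.
\begin{itemize}
\item If $G'\neq\tau G'$, then $\tau$ exchanges the summands supported on $\overline{S}_{G'}$ and $\overline{S}_{\tau G'}$. Their sum is an induced $S_2$-object, so vectors of the form $(v,\tau v)$ give a nonzero invariant summand supported on $\xi(\overline{S}_{G'})$.
\item If $G'=\tau G'$, break the symmetry at the genus $g-1$ level, not at $G_{\mathrm{sing}}$. Replace one special point by a rational component carrying that point and an extra $\epsilon$ point. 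This gives $\tilde G\neq\tau\tilde G$, and forgetting the extra point maps $\overline{S}_{\tilde G_{\mathrm{glue}}}$ onto $\overline{S}_{G'_{\mathrm{glue}}}$.
\end{itemize}

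\textbf{Indexing.} Proposition \ref{support} must be applied over the intermediate base $\overline{\mathcal{M}}_{g,\mathcal{A}\cup\epsilon^{j}}$. The resulting IC on its nodal divisor is then pushed down along $f_{g,\mathcal{A},j}$, whose restriction factors as $R\xi_*\circ Rf_{g-1,\mathcal{A}\cup1^2,j*}$. As written, you invoke it over $\overline{\mathcal{M}}_{g,\mathcal{A}}$ and then push along $f_{g,\mathcal{A}\cup\epsilon^j,k_1}$, which goes in the wrong direction.
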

\begin{proof}
    Every possible support in the decomposition of ${Rf_{g,\mathcal{A},m-n}}_*\mathbb{Q}_{\overline{{\mathcal{M}}}_{g,\mathcal{A}\cup\epsilon^{m-n}}}[3g-3+m]$ corresponds to a connected stable dual graph of genus $g$ with $n$ legs with weight data $\mathcal{A}$. We want to obtain all these stable graphs from the graph $G_{0,\mathcal{A}\cup\epsilon^{m-n}}$ which consists of a single genus $g$ vertex with $m$ legs (with weight data $\mathcal{A}\cup\epsilon^{m-n}$) attached to it. We divide the graphs into two types depending on whether they contain a circle. For the stable trees, it is already proved that we can obtain all tree-type graphs when $m$ is sufficiently large in Theorem \ref{CT}. In particular, the theorem is true when $g=0$ since in this case all graphs are tree-type.

    For the graphs with a circle, we consider the direct summand supported on $\overline{S}_{G_{\mathcal{A}\cup\epsilon^{m-n},\mathrm{sing}}}$ whose existence is guaranteed by Proposition \ref{support}.
    We prove that all the graphs with a circle can be obtained from this graph when $m$ is large enough. We reduce the problem to the lower genus case via the following commutative diagram
    $$\begin{tikzcd}
   \overline{{\mathcal{M}}}_{g-1,1^2\cup\mathcal{A}\cup\epsilon^{m-n}}  \arrow[r,"\pi_m"] \arrow[d,"f_{g-1,1^2\cup\mathcal{A},m-n}"] &\overline{S}_{G_{\mathcal{A}\cup\epsilon^{m-n},\mathrm{sing}}} \arrow[d,"f_{g,\mathcal{A},m-n}"] \\ \overline{{\mathcal{M}}}_{g-1,1^2\cup\mathcal{A}} \arrow[r,"\pi_{n}"] &\overline{S}_{G_{\mathcal{A},\mathrm{sing}}}.
   \end{tikzcd}$$
    We can glue the two weight $1$ marked points on a curve in $\overline{{\mathcal{M}}}_{g-1,1^2\cup\mathcal{A}\cup\epsilon^{m-n}}$ and the new curve is a genus $g$ curve in $\overline{S}_{G_{\mathcal{A}\cup\epsilon^{m-n},\mathrm{sing}}}$. This defines the finite map $\pi_n$ in the horizontal direction and $\pi_m$ is similarly defined. The map $f_{g-1,1^2\cup\mathcal{A},m-n}$ is the forgetful map which forgets the last $m-n$ weight $\epsilon$ marked points. The idea is that we can break the circle by lifting to $\overline{{\mathcal{M}}}_{g-1,1^2\cup\mathcal{A}\cup\epsilon^{m-n}}$ and then recover the circle after applying $R\pi_{{n}_*}$. The existence of every graph is converted to the problem on the moduli space of genus $g-1$ stable weighted pointed curves which can be handled by induction, since we have already proved the result for genus $0$.
    \begin{figure}[H]
\centering
\begin{tikzpicture}[
    vertex/.style={circle, draw, fill=black, inner sep=2pt, minimum size=4pt}
  ]
  \begin{scope}
      \node [vertex, label=below:$g-1$] (A) at (0,0) {};
      \draw (A) to[loop above] (A);
      \draw (A) -- (-0.7,0.7);
      \draw (A) -- (0.7,0.7);
      \draw (A) -- (-0.86,-0.5);
      \draw (A) -- (0.86,-0.5);
  \end{scope}
  \draw[->, thick] (1,0) -- (1.5,0);
  \begin{scope}[xshift=2.5cm]
      \node [vertex, label=below:$g-1$] (A) at (0,0) {};
      \draw (A) -- (-0.7,0.7);
      \draw (A) -- (0.7,0.7);
      \draw (A) -- (-0.86,-0.5);
      \draw (A) -- (0.86,-0.5);
      \draw[dashed] (A) -- (-0.5,0.86);
      \draw[dashed] (A) -- (0.5,0.86);
  \end{scope}
  \draw[->, thick] (3.5,0) -- (4,0);
  \begin{scope}[xshift=5cm]
      \node [vertex, label=below:$g_1$] (A) at (0,0) {};
      \node [vertex, label=below:$g_2$] (B) at (1,0) {};
      \draw (A) -- (B);
      \draw (A) -- (-0.7,0.7);
      \draw (A) -- (-0.7,-0.7);
      \draw (A) -- (0.7,-0.7);
      \draw (B) -- (1.7,0.7);
      \draw[dashed] (A) -- (-0.5,0.86);
      \draw[dashed] (B) -- (1.5,0.86);
  \end{scope}
  \draw[->, thick] (6.5,0) -- (7,0);
  \begin{scope}[xshift=8cm]
      \node [vertex, label=below:$g_1$] (A) at (0,0) {};
      \node [vertex, label=below:$g_2$] (B) at (1,0) {};
      \draw (A) to[bend right=30] (B);
      \draw (A) to[bend left=30] (B);
      \draw (A) -- (-0.7,0.7);
      \draw (A) -- (-0.7,-0.7);
      \draw (A) -- (0.7,-0.7);
     \draw (B) -- (1.7,0.7);
  \end{scope}
  \end{tikzpicture}
\caption{The notation is the same as Figure 1 except for the meaning of the dashed lines. The two dashed lines stand for two special points coming from the resolution of the nodal singularity. The second arrow is the pushforward by the one-time forgetful map in genus $g-1$. The argument made below explains how we can break the self-loop (the first arrow) and then glue the two special points to form a new edge (the third arrow) via the finite map.}
  \end{figure}

    By Proposition \ref{support}, there is a direct summand $\mathrm{IC}_{\overline{S}_{G_{\mathcal{A}\cup\epsilon^{m-n},\mathrm{sing}}}}[c]$ occurring in the decomposition theorem for $f_{g,\mathcal{A}\cup\epsilon^{m-n},k}:\overline{{\mathcal{M}}}_{g,\mathcal{A}\cup\epsilon^{m-n+k}}\to\overline{{\mathcal{M}}}_{g,\mathcal{A}\cup\epsilon^{m-n}}$ when $k$ is large. Therefore, we only need to consider the trivial local system. The general fiber of the finite map $\pi_m$ consists of two disjoint points. There is a dense open subset $U$ of $\overline{S}_{G_{\mathcal{A}\cup\epsilon^{m-n},\mathrm{sing}}}$ such that $\pi_m:\pi_m^{-1}(U)\to U$ is a $2$-fold {\'e}tale covering. Since $\pi_m$ is finite and proper, $R\pi_{m*}$ is perverse t-exact and 
    \begin{equation*}
        R\pi_{m*}\mathbb{Q}_{\overline{{\mathcal{M}}}_{g-1,1^2\cup\mathcal{A}\cup\epsilon^{m-n}}}[3g-4+m]\simeq \mathrm{IC}_{\overline{S}_{G_{\mathcal{A}\cup\epsilon^{m-n},\mathrm{sing}}}}(R^0\pi_{m*}\mathbb{Q}_{{\pi_m}^{-1}(U)}).
    \end{equation*}
    The local system $R^0\pi_{m*}\mathbb{Q}_{{\pi_m}^{-1}(U)}$ is of rank $2$ and can be decomposed into $\mathbb{Q}\oplus\mathbb{Q}_{\mathrm{sign}}$, where the monodromy of $\mathbb{Q}_{\mathrm{sign}}$ comes from swapping the two disjoint points. 
    
     Assume that the theorem is true for genus $g-1$. Then when $m$ is sufficiently large, every connected stable graph for $\overline{\mathcal{M}}_{g-1,1^2\cup\mathcal{A}}$ can be reached. The finite map $\pi_n$ just deletes the two weight $1$ legs and draws an edge between the two vertices with the weight $1$ legs. The last thing we need to verify is that the direct summand indeed comes from $\mathbb{Q}$ instead of $\mathbb{Q}_{\mathrm{sign}}$. Notice that there is an involution $\tau_m$ on $\overline{{\mathcal{M}}}_{g-1,1^2\cup\mathcal{A}\cup\epsilon^{m-n}}$ by swapping two weight $1$ marked points and similarly we can define $\tau_n$ on $\overline{{\mathcal{M}}}_{g-1,1^2\cup\mathcal{A}}$ and they are compatible with the forgetful map. Since $\pi_n\circ\tau_n=\pi_n$, $\tau_n$ induces an endomorphism on 
     \begin{equation*}
         R{f_{g,\mathcal{A},m-n}}_*R{\pi_m}_*\mathbb{Q}_{\overline{{\mathcal{M}}}_{g-1,1^2\cup\mathcal{A}\cup\epsilon^{m-n}}}\simeq R{\pi_n}_* R{f_{g-1,1^2\cup\mathcal{A},m-n}}_*\mathbb{Q}_{\overline{{\mathcal{M}}}_{g-1,1^2\cup\mathcal{A}\cup\epsilon^{m-n}}}.
     \end{equation*}
     Suppose that $\mathcal{L}$ is the direct sum of all summands in the decomposition theorem with a given support and $\rho:\pi_1(B) \to \mathrm{GL}(V)$ is the corresponding representation where $B$ is the underlying space of the local systems. Then $\tau_n$ induces an automorphism on $V$ which is compatible with $\rho$. The local systems corresponding to eigenspaces with eigenvalue $1$ come from $\mathbb{Q}$ while the local systems corresponding to eigenspaces with eigenvalue $-1$ come from $\mathbb{Q}_{\mathrm{sign}}$. If a lifted graph $G$ and its transform $\tau_n G$ are distinct, the direct sum of the summands in the decomposition theorem for $R{f_{g-1,1^2\cup\mathcal{A},m-n}}_*\mathbb{Q}_{\overline{{\mathcal{M}}}_{g-1,1^2\cup\mathcal{A}\cup\epsilon^{m-n}}}$ supported on $\overline{S}_G\cup\overline{S}_{\tau_n G}$ is an induced $S_2$-object. Denote the glued graph by $G_{\mathrm{glue}}$. Then, the local system on $S_{G_{\mathrm{glue}}}$ is the direct sum of two isomorphic parts obtained from $G$ and $\tau_n G$ respectively. At the representation level, the local system corresponding to a representation on $V\oplus{V}$. The automorphism induced by $\tau_n$ is given by 
     \begin{equation*}
         \tau_n(v_1,v_2)=(\tau(v_2),\tau(v_1)),
     \end{equation*}
     where $\tau$ is an automorphism of $V$ satisfying that $\tau^2=\mathrm{id}$. The nonzero subspace consisting of $(v,\tau(v))$ is contained in the eigenspace associated with the eigenvalue $1$. Therefore, there is a direct summand supported on $\overline{S}_{G_{\mathrm{glue}}}$ coming from the trivial local system. The exceptional case that $G$ and $\tau_n G$ are the same can be avoided by the same method that is applied to break the symmetry of a graph in the compact type case. We replace one of the special marked points by a $\mathbb{P}^1$ and move the replaced marked point to this $\mathbb{P}^1$. Moreover, we add a weight $\epsilon$ marked point which will later be forgotten on the $\mathbb{P}^1$. Then on this new graph $\tilde{G}$, the eigenvalue $1$ subspace is nonzero and there is a direct summand belonging to the pushforward of the trivial coefficient supported on $\overline{S}_{\tilde{G}_{\mathrm{glue}}}$, where $\tilde{G}_{\mathrm{glue}}$ is obtained from $\tilde{G}$ by deleting the two special markings and then drawing an edge. After forgetting the newly added marked point, $\overline{S}_{\tilde{G}_{\mathrm{glue}}}$ is mapped surjectively to $\overline{S}_{G_{\mathrm{glue}}}$. Hence, the gluing is valid for any graph $G$. Therefore, all stable graphs of genus $g$ with $n$ legs can be obtained from $G_{\mathcal{A}\cup\epsilon^{m-n},\mathrm{sing}}$ when $m$ is sufficiently large. The base case of the induction is genus $0$ which is already proved. Hence, by induction, both tree-type graphs and graphs with a circle appear for any genus $g$ and any weight data $\mathcal{A}$ when $m$ is large enough and these are exactly all the strata.
\end{proof}
\subsection{
  Main theorem for
  \texorpdfstring{$\overline{\mathcal{M}}_{g,1^m}$}{ordinary pointed moduli spaces}
}

We now prove the theorem for $\overline{\mathcal{M}}_{g,1^m}$ in almost the same way.

\begin{proof}[Proof of Theorem \ref{THM:M}:] The rule governing how a graph changes after being pushed forward by a one-time forgetful map is the same as that in the case of $\overline{\mathcal{M}}_{g,\mathcal{A}}$ by \eqref{ot1}. In particular, the theorem holds when $g=0$. We only need the existence of a direct summand supported on $\overline{S}_{G_{1^m,\mathrm{sing}}}$ to prove the theorem by induction and this is guaranteed by the wall-crossing formula. We apply the same argument as in the proof of Proposition \ref{support}. Consider the following commutative diagram
$$\begin{tikzcd}
   \overline{{\mathcal{M}}}_{g,1^{m+k}}  \arrow[r] \arrow[d,"f_{g,m,k}"] &\overline{{\mathcal{M}}}_{g,\epsilon^{m+k}} \arrow[d,"f_{g,\epsilon^{m},k}"] \\ \overline{{\mathcal{M}}}_{g,1^m} \arrow[r] &\overline{{\mathcal{M}}}_{g,\epsilon^m}.
\end{tikzcd}$$
When $k$ is large, there is a shifted intersection complex $\mathrm{IC}_{\overline{S}_{G_{\epsilon^m,\mathrm{sing}}}}[c]$ in the decomposition theorem for $f_{g,\epsilon^m,k}$. Notice that for the newly appearing support except the full support in the wall-crossing formula \eqref{WC} for the reduction map that crosses exactly one wall, the codimension is at least $2$. Since $\overline{S}_{G_{\epsilon^m,\mathrm{sing}}}$ is a divisor, on the $\overline{\mathcal{M}}_{g,1^m}$ side, there is a direct summand in the decomposition theorem for $f_{g,m,k}$ whose support is $\overline{S}_{G_{m,\mathrm{sing}}}\subset \overline{\mathcal{M}}_{g,1^m}$ and the coefficient should also be trivial. Therefore, we can apply the same argument to deduce that every possible stratum closure appears as the support when $m$ is sufficiently large.
\end{proof}

\begin{rem}\label{bd}
    We provide a effective bound for $m$ by solving the inequalities appearing in the proof. For a tree-type dual graph, by Theorem \ref{CT}, if $m-n\geq6g-5+2n$, its corresponding stratum closure appears. For a graph with a circle, we first need to fix a positive integer $\tilde{m}$ such that every stratum closure in $\overline{S}_{G_{n,sing}}$ can be obtained from the pushforward of the shifted intersection complex $\mathrm{IC}_{\overline{S}_{G_{\tilde{m},\mathrm{sing}}}}[c]$. To apply the induction hypothesis, the integer $\tilde{m}$ should satisfy that every strata closure appear in the decomposition theorem for $f_{g-1,3+n,\tilde{m}-n}:\overline{\mathcal{M}}_{g-1,1^{3+\tilde{m}}}\to\overline{\mathcal{M}}_{g-1,1^{3+n}}$. Next, we need to find a sufficiently large $m$ such that $\mathrm{IC}_{\overline{S}_{G_{\tilde{m},\mathrm{sing}}}}[c]$ occurs as a direct summand in the decomposition theorem for $f_{g,\tilde{m},m-\tilde{m}}$. Denote $m-\tilde{m}$ by $k$. Then, by the proof of Proposition ~\ref{support}, $k$ should satisfy \eqref{ineq} 
    \begin{equation*}
        2^k-1-k(2g-1)-\binom{k}{2}(2g-1)^2>k+d\binom{k}{2},
    \end{equation*}
    and $d$ is controlled by 
    \begin{equation*}
        l+d\binom{l}{2}\leq \dim H^{2l-2}(F_l,\mathbb{Q}),
    \end{equation*}
    for any $l\geq 1$. Take $l=2$ and by \eqref{ineq2}, we find that 
    \begin{equation*}
        d\leq 1+(2g-1)^2.
    \end{equation*}
    Therefore, we can rewrite \eqref{ineq} into a stronger inequality
    \begin{equation*}
        2^k-1-k(2g-1)-\binom{k}{2}(2g-1)^2>k+(1+(2g-1)^2)\binom{k}{2},
    \end{equation*}
    and $k\geq4g+6$ is a solution to this inequality. 
    
    Now we can choose a bound $b(g,n)$ for any $g,n$ inductively such that if $m-n\geq b(g,n)$, every stratum closure appears in the decomposition theorem for $f_{g,n,m-n}$. When $g=0$, the bound $b(g,n)=6g-5+2n$ is enough since every dual graph is a tree in this case. For $g\geq1$, if $b(g-1,n)$ is chosen, we can choose $b(g,n)=4g+6+b(g-1,n+3)$ and the corresponding $\tilde{m}$ is chosen as $\tilde{m}=n+b(g-1,n+3)$. According to the analysis above, this bound works and we compute the closed formula for the bound
     \begin{equation*}
        b(g,n)=2g(g+1)+6g+2n+6g-5=2n+2g^2+14g-5.
    \end{equation*}
\end{rem}

\end{document}